\newtheorem{thm}{Theorem}[section]   
\newtheorem{lem}[thm]{Lemma}         
\newtheorem{prop}[thm]{Proposition}  
\theoremstyle{definition}
\newtheorem{defi}[thm]{Definition}   
\theoremstyle{remark}
\newcommand\comm[1]{}  
\renewcommand{\phi}{\varphi}
\newcommand{\R}{{\mathbb R}}
\newcommand{\C}{{\mathbb C}}
\newcommand\e{\varepsilon}
\renewcommand\phi{\varphi}
\newcommand\by{\mbox{$\times$}}
\numberwithin{equation}{section}
\begin{document}

\date{}

\title{Continuous products of matrices\footnotetext[1]{The research was carried out at the IITP RAS at the expense of the Russian
 Foundation for Sciences (project № 14-50-00150).}}

\author[av]{Alexander Vladimirov}
\ead{vladim@iitp.ru}

\address[av]{Institute for Information Transmission Problems
\\Russian Academy of Sciences}

\begin{abstract}
We answer the question if the continuous product of square matrices $M(t)$
over $t\in [0,1]$ can be correctly defined. The case where all $M(t)$ are
taken from a finite set $\Sigma$ is studied. We find necessary and
sufficient conditions on $\Sigma$ that ensure the convergence of products
$M(t_{0}=0)M(t_{1})\dots M(t_{N}=1)$ as the partition $0<t_{1}<\dots<1$
refines. These conditions are properties LCP (left convergent product) and
RCP (right convergent product) of the set $\Sigma$. That is, it suffices to
require the convergence of all finite products $M_{1}M_{2}\dots M_{K}$ and
$M_{K}\dots M_{2}M_{1}$ as $K\to\infty$, where $M_{i}\in\Sigma$. The theory
of joint spectral radius is heavily used.
\end{abstract}

\begin{keyword} Sweeping process, oblique projection, left convergent product, joint spectral radius.
\end{keyword}
\maketitle

\section{Introduction}

Directional derivatives of polyhedral {\em sweeping processes}
\cite{MorEv7734,KunAn00,KrePo0391,10.2307/40800846} are closely related to
finite sets of projection matrices and their infinite products taken in
special order. Here we develop the theory of {\em continuous products} of
matrices keeping in mind its possible applications to sweeping process and
similar hysteresis systems.

Suppose we want to define correctly the ``multiplicative integral" of a
matrix-valued function $M(t)\in \Sigma\subseteq M_{n}(\R)$ (or $M_{n}(\C)$)
over the time interval $[0,1]$. That is, the question is if there exists a
limit of finite products $M(t_{1})\dots M(t_{K})$ as the partitions $0\le
t_{1}<\dots<t_{K}\le 1$ of $[0,1]$ refine in the sense of inclusion.

An obvious case where the answer is positive is that of a finite family
$\Sigma$ such that $\rho(\Sigma)<1$, where $\rho(\Sigma)$ is the {\em joint
spectral radius}, see
\cite{RotA6037,DauSe9222,BerBo9221,BloTh2008,Jun09,Koz:LAA10}. It is not
surprising that each function $M:[0,1]\to \Sigma$ has the zero continuous
product over $[0,1]$ (or over any other infinite linearly ordered set).

There are less obvious cases where the limit exists, again, for each map
$M:[0,1]\to \Sigma$ (no continuity or any other regularity is needed) and may
be different from zero. As we prove in this paper, for finite families
$\Sigma$, this happens exactly if $\Sigma$ is both of {\em left convergent
products (LCP)} and {\em right convergent products (RCP)} type.

Additionally, we demonstrate that LCP and RCP properties together imply {\em
transversality} (TR) of the family $\Sigma$ and, moreover, that LCP and TR
imply RCP and that RCP and TR imply LCP. It happens also that LCP and RCP
imply the convergence of any infinite sequence of matrix products where each
subsequent product is obtained by insertion of an arbitrary matrix
$A\in\Sigma$ in an arbitrary position of the previous product (CP property).

\section{Definitions}

Let a finite set ${\Sigma}=\{A_j: j\in J=\{1,\dots,k\}\}$ of real $n\by
n$-matrices be given. Denote by $\Sigma^m$, $m=1,2,\dots$, the set of all
products $A_{j_1}\dots A_{j_m}$, $A_{j_i}\in\Sigma$ for $i=1,\dots,m$. Denote
also $\Sigma^0=\{I\}$, where $I$ is the identity $n\by n$-matrix. Let
$$
{\mathcal L}(\Sigma)=\cup_{m=0,1,\dots} \Sigma^m.
$$

The set $\Sigma$ is said to be {\em product bounded} if there exists a $C>0$
such that $\|A\|<C$, $A\in {\mathcal L}(\Sigma)$.

\begin{defi}\rm
A set $\Sigma$ is called {\em LCP} (left convergent products)
if, for any sequence $S=\{j_i\in J:\ i=1,2,\dots\}$, there exists
a limit matrix $L_S$ such that
$$
\lim_{m\to\infty}\|L_S^m-L_S\|=0,
$$
where
\begin{equation}\label{elcp}
L_S^m=A_{j_m}\dots A_{j_1},\quad m=1,\dots.
\end{equation}

The set ${\Sigma}$ is called {\em RCP} if, for any sequence $S=\{j_i\in J:\
i=1,2,\dots\}$, there exists a limit matrix $R_S$ such that
$$
\lim_{m\to\infty}\|R_S^m-R_S\|=0,
$$
where
\begin{equation}\label{ercp}
R_S^m=A_{j_1}\dots  A_{j_m},\quad m=1,\dots.
\end{equation}
\end{defi}

These properties are not the same as the following simple example
demonstrates. Let $\Sigma=\{A_1,A_2\}$ in $\R^2$, where
$$
A_1=\left(\begin{array}{cc}
1&0\\0&0
\end{array}\right),\ \ %
A_2=\left(\begin{array}{cc}
1&1\\0&0
\end{array}\right).
$$
Since $A_1 A_2=A_2$ and $A_2 A_1=A_1$, we get $A_{j_1}\dots A_{j_m}=A_{j_m}$
for any sequence $j_i\in\{1,2\}$, $i=1,\dots,m$, and the set $\Sigma$ is LCP
and not RCP.

Both properties LCP and RCP are stronger than the product boundedness
\cite{BerBo9221}. Transposing the
matrix equality (\ref{elcp}) we get the following assertion.
\begin{prop}
A set $\Sigma=\{A_j: j\in J\}$ is LCP if and only if the dual set
$\Sigma^*=\{A_j^*: j\in J\}$ is RCP.
\end{prop}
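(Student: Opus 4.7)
The plan is to exploit the identity $(AB)^{*} = B^{*}A^{*}$, which reverses the order of a matrix product upon taking the adjoint. Explicitly, for any index sequence $S = \{j_i\}_{i\ge 1}$, the LCP partial product for $\Sigma$ is $L_S^m = A_{j_m}\cdots A_{j_1}$, so
\begin{equation*}
(L_S^m)^{*} \;=\; A_{j_1}^{*}\cdots A_{j_m}^{*},
\end{equation*}
which is exactly the RCP partial product $R_S^m$ for the set $\Sigma^{*}$ corresponding to the same sequence $S$. Thus the two families of partial products are in a one-to-one correspondence via adjunction, indexed by the same sequences $S$.

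Next I would use the fact that $X\mapsto X^{*}$ is a continuous linear involution on $M_n$ (indeed an isometry for the Frobenius norm, and a homeomorphism for any norm since we are in finite dimension). Consequently $X_m\to X$ if and only if $X_m^{*}\to X^{*}$. Applying this to $X_m = L_S^m$ shows that $L_S^m$ converges for every $S$ if and only if $R_S^m$ (formed from $\Sigma^{*}$) converges for every $S$, and in that case the limit matrices satisfy $R_S = L_S^{*}$.

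Combining these two observations gives the stated equivalence: $\Sigma$ is LCP if and only if $\Sigma^{*}$ is RCP. There is no real obstacle; the proof is essentially bookkeeping built on the transpose identity, and the only point to record explicitly is that adjunction is continuous, which is automatic here. The same argument, read in reverse (replacing $\Sigma$ by $\Sigma^{*}$ and using $(\Sigma^{*})^{*} = \Sigma$), gives the converse direction without any extra work.
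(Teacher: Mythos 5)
Your proof is correct and follows exactly the paper's approach: the paper also obtains the proposition by transposing the partial products in (\ref{elcp}) to reverse their order, with the continuity of transposition handling the passage to limits. Nothing further is needed.
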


By {\em discrete linear inclusion} DLI($\Sigma$) we will understand the set
of all infinite sequences $\{x_i\}$, $i=0,1,\dots$, of vectors in $\R^n$ such
that
\begin{equation}\label{ko5}
x_i=A_{j_i} x_{i-1},\quad j_i\in J,\quad i=1,2,\dots.
\end{equation}
These sequences will be called {\em paths} of $\Sigma$. The LCP property is
equivalent to convergence of any path $\{x_i\}$ of $\Sigma$, see
\cite{BerBo9221}.

There are equivalent definitions of LCP and RCP for finite sets $\Sigma$,
see, for instance, \cite{VlaSt9912}. The family $\Sigma$ is LCP if and only
if all its paths have bounded variation. It is RCP if and only if each family
of affine maps
\[
\Psi=\{A+(A-E)h_{A}: A\in\Sigma,\ h_{A}\in\R^{n}\}
\]
generates a bounded semigroup. They are {\em contraction semigroups} if
$\Sigma$ is irreducible. For affine semigroups see \cite{1064-5616-206-7-921}
and the bibliography within.

For any subset $J'\subseteq J$ of indices, let us define
two subspaces of $\R^n$:
$$
N_{J'}=\bigcap_{j\in J'} N(I-A_j),\quad
R_{J'}={\rm span}_{j\in J'} \{R(I-A_j)\},
$$
where $N(A)=\{x\in\R^n: Ax=0\}$ is the nullspace of $A$
and $R(A)=\{Ax:x\in\R^n\}$ is the range of $A$.

Both $N_{J'}$ and $R_{J'}$ are invariant subspaces for paths of the subset
$\Sigma'=\{A_j\in \Sigma: j\in J'\}$. Moreover, any affine subspace of the
form $x+R_{J'}$, $x\in\R^n$, is also invariant for these paths because
$x_i-x_{i-1}\in R_{J'}$ whenever $\{x_i\}$ is a path of $\Sigma'$.

\begin{defi}\rm
The set $\Sigma$ is {\em $0$-transversal} if $N_{J'}\cap R_{J'}=0$
for each $J'\subseteq J$; it is {\em $\R^n$-transversal} if
$N_{J'}+R_{J'}=\R^n$ for each $J'\subseteq J$.
The set $\Sigma$ is {\em transversal} if $N_{J'}\oplus R_{J'}=\R^n$
for each $J'\subseteq J$,
that is, if it is both $0$-transversal and $\R^n$-transversal.
\end{defi}

Obviously, the properties of $0$-transversality and $\R^n$-transversality
are dual to each other: any one of them holds for $\Sigma$ if and only
if the other one holds for $\Sigma^*=\{A^*:A\in\Sigma\}$.
Let us prove a simple auxiliary assertion.

\begin{lem}\label{l32}
If $\Sigma$ is LCP then $\Sigma$ is $\R^n$-transversal.
\end{lem}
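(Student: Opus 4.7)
The plan is to show, for an arbitrary nonempty $J'\subseteq J$ and an arbitrary starting vector $x_{0}\in\R^{n}$, that $x_{0}$ can be written as the sum of a vector in $N_{J'}$ and a vector in $R_{J'}$. (The case $J'=\emptyset$ is trivial because then $N_{J'}=\R^{n}$.) The idea is to build a very specific path of $\Sigma'=\{A_{j}:j\in J'\}$ that starts at $x_{0}$, exploit LCP to get a limit $x_{\infty}$, and then read off the decomposition $x_{0}=x_{\infty}-(x_{\infty}-x_{0})$.

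First, I would fix any index sequence $S=(j_{i})_{i\geq 1}$ with $j_{i}\in J'$ for all $i$ and such that every element of $J'$ occurs infinitely often in $S$ (any round-robin enumeration works). Set $x_{i}=A_{j_{i}}x_{i-1}$. Because $\Sigma$ is LCP and $S$ is an admissible index sequence, the matrices $L^{m}_{S}=A_{j_{m}}\cdots A_{j_{1}}$ converge in norm, hence $x_{m}=L^{m}_{S}x_{0}$ converges to some $x_{\infty}\in\R^{n}$.

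The two key observations are then:
\begin{itemize}
\item Each increment satisfies $x_{i}-x_{i-1}=(A_{j_{i}}-I)x_{i-1}\in R(A_{j_{i}}-I)\subseteq R_{J'}$, so every partial sum $x_{m}-x_{0}$ lies in $R_{J'}$. Since $R_{J'}$ is a (finite-dimensional, hence closed) subspace, the limit $x_{\infty}-x_{0}$ also belongs to $R_{J'}$.
\item For each $j\in J'$, pick a subsequence $(i_{k})$ with $j_{i_{k}}=j$. Then $x_{i_{k}}=A_{j}x_{i_{k}-1}$; passing to the limit using continuity of $A_{j}$ gives $x_{\infty}=A_{j}x_{\infty}$. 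Thus $x_{\infty}\in N(I-A_{j})$ for every $j\in J'$, i.e.\ $x_{\infty}\in N_{J'}$.
\end{itemize}

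Combining these, $x_{0}=x_{\infty}+(x_{0}-x_{\infty})\in N_{J'}+R_{J'}$, and since $x_{0}$ was arbitrary, $N_{J'}+R_{J'}=\R^{n}$. There is no real obstacle here: the only subtlety is making sure the chosen sequence $S$ visits every index in $J'$ infinitely often (so the limit is pinned down as a common fixed point of all $A_{j}$, $j\in J'$) and noting that LCP is automatically inherited by the sub-family $\Sigma'$ because sequences in $J'$ are a special case of sequences in $J$.
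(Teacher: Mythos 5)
Your proof is correct and is essentially the argument in the paper: both construct a path starting at an arbitrary $x_{0}$ in which every $A_{j}$, $j\in J'$, occurs infinitely often, and both use that the LCP limit $x_{\infty}$ lies in $N_{J'}$ while $x_{\infty}-x_{0}\in R_{J'}$. The only difference is presentational --- the paper argues by contradiction from a hypothetical $x_{0}$ with $(x_{0}+R_{J'})\cap N_{J'}=\emptyset$, whereas you read off the decomposition $x_{0}=x_{\infty}+(x_{0}-x_{\infty})$ directly (and you supply the continuity argument for $x_{\infty}\in N_{J'}$ that the paper leaves implicit).
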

\begin{proof}
Suppose the contrary, that is, $N_{J'}+R_{J'}\ne\R^n$ for some $J'\subseteq
J$. Then there exists an $x_0\in\R^n$ such that $(x_0+R_{J'})\cap
N_{J'}=\emptyset$. Consider a path $\{x_0,x_1,\dots\}$ of $\Sigma$ where each
matrix $A_j$, $j\in J'$ is used infinitely many times. Then the limit point
$x^*=\lim_{i\to\infty}x_i$ belongs to $N_{J'}$. Since $x_i\in x_0+ R_{J'}$,
we also have $x^*\in x_0+R_{J'}$, which is a contradiction.
\end{proof}

Hence, the dual assertion is also true:
\begin{lem}
If $\Sigma$ is RCP then $\Sigma$ is $0$-transversal.
\end{lem}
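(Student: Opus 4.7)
The natural plan is to invoke duality and reduce this to the preceding Lemma~\ref{l32}. Two pieces of duality have already been set up in the excerpt: the Proposition stating that $\Sigma$ is LCP if and only if $\Sigma^*$ is RCP, and the remark preceding the definition of transversality asserting that $0$-transversality and $\R^n$-transversality are dual to each other, i.e.\ $\Sigma$ is $0$-transversal iff $\Sigma^*$ is $\R^n$-transversal. So the sole task is to chain these equivalences.

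Concretely, I would proceed as follows. Assume $\Sigma$ is RCP. By the Proposition, $\Sigma^*$ is LCP. Applying Lemma~\ref{l32} to $\Sigma^*$, the set $\Sigma^*$ is $\R^n$-transversal, i.e.\ $N_{J'}(\Sigma^*)+R_{J'}(\Sigma^*)=\R^n$ for every $J'\subseteq J$. Dualizing (taking orthogonal complements of both sides and using $N(I-A^*)=R(I-A)^{\perp}$ together with $R(I-A^*)=N(I-A)^{\perp}$) one obtains $N_{J'}(\Sigma)\cap R_{J'}(\Sigma)=\{0\}$ for every $J'\subseteq J$, which is exactly $0$-transversality of $\Sigma$.

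The only step that requires any checking is the duality between $0$- and $\R^n$-transversality, but this is the standard linear-algebraic identity
\[
(U+V)^{\perp}=U^{\perp}\cap V^{\perp},
\]
applied to $U=N(I-A_j^*)$ and $V=R(I-A_j^*)$, combined with $N(B^*)=R(B)^{\perp}$. Since the paper has already stated this duality explicitly, there is no real obstacle; the proof is essentially a one-liner referring to Proposition~2 (the LCP/RCP duality) and Lemma~\ref{l32}.

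If instead one preferred a self-contained direct argument, the strategy would mirror the proof of Lemma~\ref{l32}: pick $0\ne v\in N_{J'}\cap R_{J'}$, write $v=\sum_i(I-A_{k_i})u_i$ with $k_i\in J'$, and exploit the telescoping identity $A_{j_1}\cdots A_{j_{m-1}}(I-A_{j_m})u = A_{j_1}\cdots A_{j_{m-1}}u - A_{j_1}\cdots A_{j_m}u$ to build a right product sequence whose increments do not tend to zero, contradicting RCP. This is workable but more delicate than the duality route, so I would keep it only as a backup.
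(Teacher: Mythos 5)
Your proposal is correct and matches the paper's own argument: the paper derives this lemma purely by duality from Lemma~\ref{l32}, exactly as you do (the text simply says ``Hence, the dual assertion is also true''), using the LCP/RCP duality under transposition and the duality between $0$- and $\R^n$-transversality. Your spelled-out verification of the orthogonal-complement identities is a correct filling-in of what the paper leaves implicit.
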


Thus, we get the following result.

\begin{prop}\label{t1}
If $\Sigma$ is both LCP and RCP then it is transversal.
\end{prop}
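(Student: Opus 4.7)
The statement is essentially an immediate synthesis of the two lemmas just proved, so there is very little to do beyond observing that the hypotheses line up with the definition of transversality. The plan is to unpack the definition of transversal into its two constituent parts, apply the appropriate lemma to each, and combine.

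\textbf{Approach.} Recall from Definition 2.2 that $\Sigma$ is transversal iff $N_{J'} \oplus R_{J'} = \R^n$ for every $J' \subseteq J$, and that this decomposition is equivalent to the conjunction of $0$-transversality ($N_{J'} \cap R_{J'} = 0$) and $\R^n$-transversality ($N_{J'} + R_{J'} = \R^n$). So it suffices to verify each of these two conditions separately under the hypotheses LCP and RCP.

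\textbf{Main step.} Since $\Sigma$ is assumed LCP, Lemma \ref{l32} gives $N_{J'} + R_{J'} = \R^n$ for every $J' \subseteq J$, i.e., $\R^n$-transversality. Since $\Sigma$ is also assumed RCP, the dual lemma (the statement immediately following Lemma \ref{l32}) gives $N_{J'} \cap R_{J'} = 0$ for every $J' \subseteq J$, i.e., $0$-transversality. Taken together these two properties yield the direct sum decomposition $N_{J'} \oplus R_{J'} = \R^n$ for every $J'$, which is exactly transversality.

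\textbf{Obstacle.} There is essentially no obstacle here: the real content has already been done in Lemma \ref{l32}, whose proof constructs a bounded-variation path hitting every $A_j$ with $j \in J'$ infinitely often and observes that its limit must lie in the affine subspace $x_0 + R_{J'}$ as well as in $N_{J'}$. The dual statement for RCP is obtained by transposition via the earlier proposition that $\Sigma$ is LCP iff $\Sigma^*$ is RCP, combined with the duality between $0$- and $\R^n$-transversality noted just before Lemma \ref{l32}. The proof of Proposition \ref{t1} itself is therefore only a one-line combination of these two facts.
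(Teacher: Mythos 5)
Your proof is correct and matches the paper's own (implicit) argument exactly: the paper derives Proposition \ref{t1} as an immediate combination of Lemma \ref{l32} (LCP implies $\R^n$-transversality) and its dual (RCP implies $0$-transversality). Nothing further is needed.
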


\section{Relations between LCP and RCP}

We will need the following assertion from \cite{ElsNo9733}.

\begin{thm}\label{bels}
Suppose $\Sigma=\{A_j:j\in J\}$ is LCP and $N_J=\{0\}$.
Then there exists a norm $\|\cdot\|_{\Sigma}$
in $\R^n$ and  a constant $0\le q<1$ such that
\begin{equation}\label{st1}
\|A_j\|_{\Sigma}\le 1,\ j\in J, \quad{\rm and}
\end{equation}
\begin{equation}\label{st2}
\|A_{j_1}\dots A_{j_m}\|_{\Sigma}\le q
\end{equation}
for any finite product containing each $A_j$ from $\Sigma$.
\end{thm}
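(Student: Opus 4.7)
My plan is to build the norm $\|\cdot\|_\Sigma$ directly from the product-bounded semigroup $\mathcal{L}(\Sigma)$, derive (\ref{st1}) essentially by definition, and treat (\ref{st2}) first pointwise and then uniformly via compactness, using LCP together with $N_J=\{0\}$.

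Step 1 (norm and (\ref{st1})). Since LCP implies product boundedness,
\[
\|x\|_\Sigma:=\sup_{A\in\mathcal{L}(\Sigma)}\|Ax\|
\]
is finite and defines a norm on $\R^n$ (taking $A=I$ gives $\|x\|\le\|x\|_\Sigma$, and the product bound gives the upper estimate). The inclusion $\{BA_j:B\in\mathcal{L}(\Sigma)\}\subseteq\mathcal{L}(\Sigma)$ then yields (\ref{st1}) at once:
\[
\|A_jx\|_\Sigma=\sup_B\|BA_jx\|\le\sup_C\|Cx\|=\|x\|_\Sigma.
\]

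Step 2 (pointwise strict contraction). For any fixed covering product $P=A_{j_1}\cdots A_{j_m}$ I would prove $\|P\|_\Sigma<1$ by contradiction. If equality held, finite-dimensional compactness would produce a $\|\cdot\|_\Sigma$-unit vector $x$ with $\|Px\|_\Sigma=1$, and (\ref{st1}) applied factor-by-factor would force every intermediate vector in the evaluation of $Px$ to have $\|\cdot\|_\Sigma$-norm $1$ as well. Iterating $P$ produces a path of $\Sigma$ of unit-norm vectors in which each $A_j$ acts infinitely often; by LCP this path has a unit-norm limit $x^*$, and extracting the subsequences on which a given $A_j$ acts gives $A_jx^*=x^*$ for every $j\in J$. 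Then $x^*\in N_J=\{0\}$, contradicting $\|x^*\|_\Sigma=1$.

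Step 3 (uniform constant). Let $\mathcal{C}\subset\mathcal{L}(\Sigma)$ be the set of products containing every generator and $\overline{\mathcal{C}}$ its operator-norm closure; product boundedness makes $\overline{\mathcal{C}}$ compact, and $P\mapsto\|P\|_\Sigma$ is continuous, so it is enough to show $\|P\|_\Sigma<1$ for every $P\in\overline{\mathcal{C}}$---the maximum over the compact set $\overline{\mathcal{C}}$ then furnishes the required $q<1$. Step~2 handles $P\in\mathcal{C}$. For a limit $P\in\overline{\mathcal{C}}\setminus\mathcal{C}$ I would pick covering products $P_k\to P$ of unbounded length and an optimal $\|\cdot\|_\Sigma$-unit vector $x^*$; the intermediate vectors $y_i^k$ in the evaluation of $P_kx^*$ satisfy $\|y_{m_k}^k\|_\Sigma\le\|y_i^k\|_\Sigma\le\|y_0^k\|_\Sigma=1$ with $\|y_{m_k}^k\|_\Sigma\to 1$, so $\|y_i^k\|_\Sigma\to 1$ uniformly in $i$. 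A diagonal extraction (possible because $J$ is finite) stabilises the first-applied matrices of $P_k$ into an infinite index sequence $S_1$, and LCP then delivers a unit-norm limit $z^*$ fixed by every $A_j$ with $j$ in $J_\infty(S_1)$, the set of generators recurring infinitely often in $S_1$.

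The main obstacle is the case $J_\infty(S_1)\subsetneq J$: the ``missing'' generators still appear in every $P_k$, but only at positions whose $c$-index drifts to infinity with $k$. My plan is to iterate the extraction, restarting from a $c$-position at which a missing generator first occurs and re-running the squeeze from $z^*$, producing a new unit-norm limit vector fixed by a strictly enlarged occurrence set. Since $J$ is finite, finitely many rounds yield a unit-norm vector fixed by every $A_j$, which contradicts $N_J=\{0\}$ and completes the proof.
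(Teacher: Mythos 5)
The paper offers no proof of this statement---it is imported verbatim from Elsner--Friedland \cite{ElsNo9733}---so your argument has to stand on its own, and it does not: Step 2 is false for the norm you fix in Step 1. Take $n=2$ and $\Sigma=\{A\}$ with $A=\bigl(\begin{smallmatrix}0&2\\0&0\end{smallmatrix}\bigr)$. The hypotheses hold ($A^m=0$ for $m\ge 2$, so $\Sigma$ is LCP, and $N(I-A)=\{0\}$), and $P=A$ is a product containing every generator. But ${\mathcal L}(\Sigma)=\{I,A,0\}$, so $\|x\|_\Sigma=\max(\|x\|,\|Ax\|)$, and for $x=(0,1/2)^{T}$ one finds $\|x\|_\Sigma=\|Ax\|_\Sigma=1$, hence $\|A\|_\Sigma=1$, not $<1$. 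The theorem itself is of course still true for this $\Sigma$ (any norm adapted to $\rho(A)=0$ works), so the defect is not in your contradiction argument alone: the supremum norm $\sup_{B\in{\mathcal L}(\Sigma)}\|Bx\|$ is simply the wrong norm for \eqref{st2}, and no repair of Steps 2--3 can succeed without changing the construction of $\|\cdot\|_\Sigma$.

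The exact point where your Step 2 reasoning breaks is the phrase ``iterating $P$ produces a path of unit-norm vectors.'' Nonexpansiveness only makes the $\|\cdot\|_\Sigma$-norms non-increasing along the path; the squeeze $\|Px\|_\Sigma=\|x\|_\Sigma=1$ pins the intermediate norms at $1$ only during the \emph{first} pass through $P$, and nothing guarantees that $Px$ is again a norming vector for $P$. Your own argument then shows the path limit $x^{*}$ is fixed by every $A_j$ and so lies in $N_J=\{0\}$; that is, the path tends to $0$, which is perfectly compatible with $\|Px\|_\Sigma=1$ (in the example above $P^{2}x=0$), so no contradiction is obtained. Step 3's compactness reduction (closure of the set of covering products, continuity of $P\mapsto\|P\|_\Sigma$) is fine in outline but has nothing to stand on until Step 2 is replaced; its diagonal-extraction handling of generators whose occurrences drift off to infinity is also only a sketch. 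To actually prove the theorem you need a genuinely different norm construction, which is the content of the Elsner--Friedland paper the theorem is quoted from.
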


\begin{thm}\label{t2}
If $\Sigma=\{A_j:j\in J\}$
is LCP and $0$-transversal then it is also RCP.
\end{thm}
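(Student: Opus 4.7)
The plan is to decompose $\R^n$ via transversality — available here because Lemma~\ref{l32} together with the hypothesis gives full transversality — and then to run the contractive norm of Theorem~\ref{bels} on the contracting summand.

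Given a sequence $S=\{j_i\}_{i\ge 1}$, I would first let $J''\subseteq J$ be the set of indices appearing infinitely often in $S$ and pick $M$ with $j_i\in J''$ for all $i>M$; set $P=A_{j_1}\cdots A_{j_M}$. Transversality yields $\R^n=N_{J''}\oplus R_{J''}$; the paper's earlier discussion gives that $R_{J''}$ is invariant under each $A_j$, $j\in J''$, and that any such $A_j$ fixes $N_{J''}$ pointwise (by the very definition of $N_{J''}$). Hence, for $x=n+r$ with $n\in N_{J''}$, $r\in R_{J''}$, and $m>M$,
\[
A_{j_1}\cdots A_{j_m}\,x \;=\; P\,n \;+\; P\bigl(A_{j_{M+1}}\cdots A_{j_m}\bigr)r,
\]
reducing convergence of $R_S^m x$ to showing that $\bigl(A_{j_{M+1}}\cdots A_{j_m}\bigr)r\to 0$ inside $R_{J''}$.

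Next I would apply Theorem~\ref{bels} to the restricted family $\{A_j|_{R_{J''}}:j\in J''\}$: it is LCP (LCP descends to subfamilies of $\Sigma$ and to restrictions to invariant subspaces), and its common-fixed-point subspace inside $R_{J''}$ is $N_{J''}\cap R_{J''}=\{0\}$ by $0$-transversality. That yields a norm on $R_{J''}$ in which each restricted generator is non-expansive, with strict contraction rate $q<1$ on any product containing every generator. Because every $j\in J''$ recurs infinitely often in the tail $j_{M+1},j_{M+2},\ldots$, I would chop the tail product into $k$ consecutive complete blocks plus an incomplete remainder, obtaining the norm bound $q^k\to 0$; hence $R_S^m x\to P\pi_{N_{J''}}(x)$, and the limit matrix $R_S=P\pi_{N_{J''}}$ demonstrates RCP.

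I expect the main obstacle to be the bookkeeping that certifies Theorem~\ref{bels} applies to the restricted family $\{A_j|_{R_{J''}}:j\in J''\}$: one must verify both that LCP is preserved under passage to a subindex set and under restriction to the invariant subspace $R_{J''}$, and that the corresponding common fixed subspace inside $R_{J''}$ is indeed trivial, so that the contractive norm and the block estimate $q^k$ can be legitimately deployed in the tail of the product.
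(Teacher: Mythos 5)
Your proof is correct, and it takes a recognizably different route from the paper's. The paper argues by induction on $\#J$: it first reduces to the case $N_J=\{0\}$ via the block decomposition $\R^n=N_J\oplus R_J$ (using the same combination of Lemma~\ref{l32} and $0$-transversality that you use), then applies Theorem~\ref{bels} directly to $\Sigma$ when every letter recurs infinitely often (the product then tends to $0$), and invokes the induction hypothesis on a proper subfamily when some letter eventually disappears from the sequence. You avoid the induction altogether by decomposing with respect to the recurrent alphabet $J''$ rather than all of $J$, and by restricting the subfamily to the invariant subspace $R_{J''}$, where its common fixed subspace is $N_{J''}\cap R_{J''}=\{0\}$, so that Theorem~\ref{bels} applies there. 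The price is exactly the bookkeeping you flag --- checking that LCP passes to subfamilies and to restrictions to invariant subspaces, both of which are immediate (a left product over a subfamily is a left product over $\Sigma$, and the restriction of a convergent product to an invariant subspace converges) --- and the reward is a non-inductive argument that also exhibits the limit explicitly as $R_S=P\,\pi_{N_{J''}}$, the finite prefix composed with the projection onto $N_{J''}$ along $R_{J''}$, which is more information than the paper's proof extracts.
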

\begin{proof}
Let us use induction on the cardinality $k$ of $\Sigma$.
For $k=1$, there is no difference between LCP and RCP by
definition.

Suppose the assertion is true for all matrix sets of cardinality
$1,\dots,k-1$ and prove it for $\Sigma=\{A_j:j\in J=\{1,\dots,k\}\}$.
Let us first assume $N_J=\{0\}$
Consider a right-infinite product of matrices $A_{j_1} A_{j_2}\dots$.
Suppose that any $j\in J$ occurs in the sequence
$\{j_1,j_2,\dots\}$ infinitely many times. Then we can represent
each finite product $A_{j_1}\dots A_{j_m}$ as
$B_1\dots B_{s(m)}$, where each $B_p$, $p=1,\dots, s(m)$,
is a finite product of matrices $A_{j_i}$ containing all
$A_j$, $j\in J$, and $s(m)\to\infty$ as $m\to\infty$.
Then, according to
Theorem \ref{bels}, the sequence of products $A_{j_1}\dots A_{j_m}$
converges to the zero matrix as $m\to\infty$.

Let us consider the remaining cases, that is, suppose that
for some $j\in J$, there
exists  an $m>0$ such that $j\not\in\{j_m,j_{m+1},\dots\}$.
By the induction hypothesis, the product
$A_{j_m} A_{j_{m+1}}\dots$ converges to some matrix $A$ and,
hence, the product $A_{j_1}\dots A_{j_m}A_{j_{m+1}}\dots$
converges to the matrix $A_{j_1}\dots A_{j_{m-1}}A$,
and the required assertion is proved.

It remains to consider the case $N_J\ne\{0\}$. Because of the
$0$-transversality assumption and Lemma \ref{l32} we
have $R^n=N_J\oplus R_J$. Since the subspaces $R_J$ and $N_J$
are invariant to all $A_j\in\Sigma$,
we can reduce all matrices $A_j$ to the form
$$
A_j=\left(\begin{array}{cc}
I&0\\0&\tilde{A}_j
\end{array}\right)
$$
by the same nonsingular linear change of variables corresponding to
the decomposition $\R^n=N_J\oplus R_J$. Obviously, both the
LCP and RCP properties of $\Sigma$ are equivalent to
those of $\tilde{\Sigma}=\{\tilde{A}_j: j\in J\}$,
and for $\tilde{\Sigma}$ we have $\tilde{N}_J = \{0\}$.
\end{proof}

Transposition of matrices $A_j$ gives us the dual assertion:

\begin{thm}\label{t3}
If $\Sigma=\{A_j:j\in J\}$
is RCP and $\R^n$-transversal then it is also LCP.
\end{thm}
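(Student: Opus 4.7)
The plan is to reduce this statement to Theorem~\ref{t2} by passing to the dual (transposed) family, exploiting the two duality facts that have already been recorded in the paper. The first is the proposition that $\Sigma$ is LCP if and only if $\Sigma^{*}=\{A_{j}^{*}:j\in J\}$ is RCP (equivalently, $\Sigma$ is RCP iff $\Sigma^{*}$ is LCP), which follows from transposing the defining finite products. The second is the remark immediately following the definition of transversality, namely that $\R^{n}$-transversality of $\Sigma$ is equivalent to $0$-transversality of $\Sigma^{*}$; this comes from the identities $N(I-A^{*})=R(I-A)^{\perp}$ and $R(I-A^{*})=N(I-A)^{\perp}$, which turn an orthogonal-complement computation into the swap $(N_{J'},R_{J'})\leftrightarrow(R_{J'}^{\perp},N_{J'}^{\perp})$.

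With these in hand, the argument I would write is essentially one line of bookkeeping. Assume $\Sigma$ is RCP and $\R^{n}$-transversal. By the first duality, $\Sigma^{*}$ is LCP; by the second, $\Sigma^{*}$ is $0$-transversal. Now Theorem~\ref{t2} applies to $\Sigma^{*}$ and yields that $\Sigma^{*}$ is RCP. Applying the first duality in the reverse direction concludes that $\Sigma$ is LCP, which is what we wanted.

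There is no real obstacle here, since the heavy lifting has already been done in Theorem~\ref{t2}; the only thing one has to be mildly careful about is recording the two duality equivalences in the right direction (LCP$\leftrightarrow$RCP under $*$, and $0$-transversal$\leftrightarrow\R^{n}$-transversal under $*$) so that the chain of implications assembles correctly. Thus the proof can reasonably be written in a single sentence: ``apply Theorem~\ref{t2} to $\Sigma^{*}$,'' followed by invoking the two dualities before and after.
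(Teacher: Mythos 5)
Your proposal is correct and is exactly the paper's own argument: the paper derives Theorem~\ref{t3} from Theorem~\ref{t2} by transposition, using the same two duality facts (LCP of $\Sigma$ iff RCP of $\Sigma^{*}$, and $0$-transversality iff $\R^{n}$-transversality of the dual family). Your version just spells out the bookkeeping the paper leaves implicit.
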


Consider the three properties LCP, RCP and transversality (TR) of
sets of matrices. Proposition \ref{t1} and Theorems
\ref{t2}, and \ref{t3} yield the following result.
\begin{thm}
For a finite set of matrices, any two of the properties LCP, RCP, and TR
imply the third one.
\end{thm}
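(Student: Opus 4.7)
The plan is essentially a bookkeeping argument: the three pairwise implications have already been set up, and I only need to check that each pair of hypotheses indeed triggers one of them. I would split into the three cases according to which property is the conclusion.

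First, assume LCP and RCP. Then Proposition \ref{t1} directly gives transversality, so TR holds. Second, assume LCP and TR. Since transversality is defined as the conjunction of $0$-transversality and $\R^n$-transversality, in particular $\Sigma$ is $0$-transversal. Then Theorem \ref{t2} applies to give RCP. Third, assume RCP and TR. By the same unpacking of the definition, $\Sigma$ is $\R^n$-transversal, so Theorem \ref{t3} applies and yields LCP.

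There is essentially no obstacle: the only thing to check is that TR supplies both flavors of transversality needed to feed the two directional theorems. The work was already done in Proposition \ref{t1} and Theorems \ref{t2} and \ref{t3}. A short closing sentence noting that the three cases exhaust the possibilities completes the argument.
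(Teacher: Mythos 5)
Your proposal is correct and matches the paper exactly: the paper states this theorem as an immediate consequence of Proposition \ref{t1}, Theorem \ref{t2}, and Theorem \ref{t3}, and your three-case bookkeeping (noting that TR supplies both $0$-transversality and $\R^n$-transversality as needed) is precisely the intended argument.
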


Let us say that the set $\Sigma$ is {\em CP} if it is LCP, RCP,
and transversal. As an example of a CP set, consider any finite
set $\Sigma$ of orthogonal projections $P_j$ onto subspaces
$L_j\subseteq\R^n$ of arbitrary dimensions. The transversality
of $\Sigma$ is immediate. The LCP property of
$\Sigma$ is an easy consequence of the following result
\cite{VlaSt9912}.

\begin{prop}
If, for any path $\{x_i\}$ of a finite set $\Sigma$, the relation
\begin{equation}\label{6e1}
\lim_{i\to\infty}\|x_{i+1}-x_i\|=0
\end{equation}
holds, then $\Sigma$ is LCP.
\end{prop}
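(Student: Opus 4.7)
I would prove the proposition directly by showing that every path $\{x_i\}$ of $\Sigma$ converges, which is the definition of LCP. The argument has three stages, with the boundedness step being the main technical obstacle.

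As a preliminary uniformization, I would apply K\"onig's lemma to the finitely branching tree of path-prefixes starting from a fixed $x_0$ (fan-out at most $|\Sigma|$) to upgrade the pointwise vanishing of differences to the following uniform version: for every $\epsilon>0$ there exists $N$ such that $\|x_{i+1}-x_i\|<\epsilon$ for every path from $x_0$ and every $i\geq N$. Failure of uniformity would produce, by K\"onig, an infinite path violating the hypothesis.

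The core step is to prove every path is bounded. The inequality $\bigl|\|x_{i+1}\|-\|x_i\|\bigr|\leq\|x_{i+1}-x_i\|$ forces $\|x_i\|$ to grow sub-linearly and in particular no path grows exponentially; by the Berger--Wang formula this already gives $\rho(\Sigma)\leq 1$. If nevertheless some path satisfies $\|x_{i_k}\|\to\infty$, I would normalize $y_k=x_{i_k}/\|x_{i_k}\|$ to a limit $y^*$; the relation $\|x_{i+1}\|/\|x_i\|\to 1$ together with the finiteness of $\Sigma$ forces $Ay^*=y^*$ for every matrix used cofinally along an appropriate subsequence. Passing to the quotient $\R^n/\R y^*$ yields a lower-dimensional system that still satisfies the hypothesis, and an induction on $n$ (base $n=0$ trivial) together with the observation that along the invariant line $\R y^*$ each matrix in the tail acts as a constant translation---incompatible with $\|x_{i+1}-x_i\|\to 0$ unless all such translations vanish---delivers the desired contradiction, proving boundedness.

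Once the path is bounded, let $L$ denote its non-empty compact limit set; since $\|x_{i+1}-x_i\|\to 0$, $L$ is also connected. For each $y\in L$ choose a subsequence $x_{i_k}\to y$ with $A_{j_{i_k+1}}=A\in\Sigma$ held constant by pigeonhole; vanishing differences then give $Ay=y$, so $y$ has a nontrivial stabilizer $\Sigma_y=\{A\in\Sigma:Ay=y\}$. By compactness of $L$, finiteness of $\Sigma$ and strict positivity of $\|Ay-y\|$ for $A\notin\Sigma_y$, there is a uniform threshold $\epsilon_0>0$ such that, once $\|x_{i+1}-x_i\|<\epsilon_0/2$ (valid from some step on by the uniformization), whenever $x_i$ lies in a small enough ball around a point $y\in L$ only matrices of $\Sigma_y$ may be applied next. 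The map $y\mapsto\Sigma_y$ is upper semi-continuous because each set $\{y:Ay=y\}$ is closed, so connectedness of $L$ together with a stratification/covering argument forces $L$ to collapse to a single point, whence $\{x_i\}$ converges. The delicacy throughout lies in the boundedness step: the pointwise hypothesis $\|x_{i+1}-x_i\|\to 0$ does not on its own exclude slowly unbounded scalar behavior (e.g.\ $\sqrt{i}$ has vanishing differences), and one must exploit the linear-algebraic structure together with the finiteness of $\Sigma$ via a dimensional induction after quotienting by the escape direction, or alternatively via joint-spectral-radius machinery.
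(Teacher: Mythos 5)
First, a point of reference: the paper does not actually prove this proposition --- it is quoted from the cited work of Vladimirov, Elsner and Beyn, and the only thing verified in the text is that orthogonal projections satisfy its hypothesis. So your argument has to stand on its own, and it does not: several of its load-bearing steps are false or unclosed. The most concrete failure is the K\"onig-lemma uniformization. The claimed uniform statement is simply wrong: take $\Sigma=\{I,P\}$ with $P$ a nontrivial orthogonal projection in $\R^2$ and $x_0$ with $\|Px_0-x_0\|=\delta>0$. Every path of this set has square-summable, hence vanishing, increments (because $\|Px\|^2=\|x\|^2-\|Px-x\|^2$), yet the path that applies $I$ for the first $N$ steps and then $P$ has a step of size exactly $\delta$ at time $N$, for every $N$. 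So no $N(\epsilon)$ uniform over paths from $x_0$ exists. The K\"onig argument does not save this: an infinite branch all of whose finite prefixes extend to witnesses of a late large step need not itself contain infinitely many large steps, so ``failure of uniformity'' does not produce a path violating the hypothesis.

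The core boundedness step also has real gaps. To pass to $\R^n/\R y^*$ as a linear system you need $\R y^*$ to be invariant under \emph{every} matrix of $\Sigma$, whereas your normalization argument only gives $Ay^*=y^*$ for matrices applied cofinally along the escaping subsequence; and the claim that matrices ``act as a constant translation'' along $\R y^*$ is incorrect --- writing $x_i=c_iy^*+b_i$, the increment of $c_i$ is the $y^*$-component of $Ab_i$, which varies with $i$, so no contradiction with vanishing increments follows. You correctly identify boundedness as the crux, but the sketch does not close it. Finally, in the limit-set stage the ``uniform threshold $\epsilon_0$'' does not exist in general: the set $\{y:Ay\ne y\}$ is open, so $\inf\{\|Ay-y\|:y\in L,\ Ay\ne y\}$ is typically $0$, and upper semicontinuity of $y\mapsto\Sigma_y$ points the wrong way for the exclusion you want; the concluding ``stratification/covering argument'' collapsing a continuum covered by finitely many closed fixed-point sets to a single point is asserted rather than proved. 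In short, the overall architecture (boundedness, then a connected limit set consisting of fixed points) is a reasonable plan, but each of its three stages currently rests on a step that is either false as stated or missing.
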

Indeed, for any $j\in J$, we have
$$
\|P_j x\|=\sqrt{\|x\|^2-\|P_j x-x\|^2}
$$
and, hence, (\ref{6e1}) must hold for any path $\{x_i\}$ of $\Sigma$.

\section{Continuous products of matrices}

In all what follows $\Sigma$ is a finite CP set.
Let $R$ be an arbitrary  linearly ordered set and
let $A$ be a map from $R$ to $\Sigma$ (a matrix-valued function
on $R$ ranging in $\Sigma$). For any finite subset
$S=\{s_1<s_2<\dots<s_k\}\subset R$, the product
$$
M_S=A(s_1)\dots A(s_k)
$$
is defined. Since $\Sigma$ is CP, this definition is extended
in a natural way to any countable subset $S\subset R$ of
the form $S=\{s_1<s_2<\dots\}$ or $S=\{s_1>s_2>\dots\}$.
We will define the product
\begin{equation}\label{e31}
M_S=\prod_{r\in S}A(r)
\end{equation}
for each subset $S$ of $R$. It suffices to do this for $R$
itself because each subset of a linearly ordered set is again
a linearly ordered set.

Let us use an iterative procedure on $k=\# J$ for this definition
(by $\#$ we denote the cardinality of the set).
Suppose that, for all $J'$ such that $\#J'<K$,
the products (\ref{e31}) are defined. Define $I_R(A)$ as the
maximal number of disjoint intervals $S_i\subseteq R$
such that $A(S_i)=\Sigma$ (let us call them {\em complete intervals}).
By an interval we understand a set $S\subset R$ without holes,
that is, conditions $a,b\in S$, $g\in R$, $a<g<b$ should imply $g\in S$.
Note that we can assume without loss of generality that
$\cup_i S_i=R$, that is, intervals $S_i$ are {\em adjacent}.
For any incomplete interval $S$, the product $M(S)$ is already
defined.

Now, we need an auxiliary result.
\begin{lem}\label{l3}
If $I_R(A)<\infty$, there exists a partition of $R$ into
no more than $3I_R(A)$ adjacent disjoint incomplete intervals.
\end{lem}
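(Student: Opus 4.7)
The plan is to induct on $N=I_R(A)$, working under the assumption $N\ge 1$ (the case $N=0$ is interpreted as the trivial one where $R$ is itself an incomplete interval).

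For the base case $N=1$, I would introduce the subsets $L^{*}=\{r\in R:\{s\in R:s<r\}\text{ is incomplete}\}$ and $R^{*}=\{r\in R:\{s\in R:s>r\}\text{ is incomplete}\}$, respectively an initial and a final segment of $R$. Since $I_R(A)=1$ rules out any $r$ for which both open halves are complete (they would form two disjoint complete intervals), we obtain $L^{*}\cup R^{*}=R$. If $L^{*}\cap R^{*}\ne\emptyset$, I pick any $r^{*}$ in the overlap and partition $R=\{s<r^{*}\}\cup\{r^{*}\}\cup\{s>r^{*}\}$ into three incomplete intervals. Otherwise $R=L^{*}\sqcup R^{*}$ is a Dedekind cut in which at most one side is complete; if both sides are incomplete the cut itself is a two-part partition, and if (say) $L^{*}$ is complete I choose witnesses $r_\sigma\in L^{*}$ with $A(r_\sigma)=\sigma$ for every $\sigma\in\Sigma$, set $r^{*}=\max_\sigma r_\sigma$ (a max over finitely many elements of $R$, hence in $R$), and observe that $r^{*}=\max L^{*}$: any $r\in L^{*}$ with $r>r^{*}$ would have its strict initial segment containing all $r_\sigma$ and so be complete, a contradiction. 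Partitioning $R$ at $r^{*}$ again gives three incomplete intervals; the case $R^{*}$ complete is dual.

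For the inductive step $N\ge 2$ I would fix disjoint complete intervals $J_1<J_2<\dots<J_N$ and define the initial segment $V=\{r\in R:r<w\text{ for every }w\in J_2\cup\dots\cup J_N\}$. Then $V\supseteq J_1$ (so $V$ is complete) and is disjoint from $J_2\cup\dots\cup J_N$; any two disjoint complete subintervals of $V$ would combine with $J_2,\dots,J_N$ to give $N+1$ disjoint complete intervals in $R$, contradicting maximality, so $I_V(A)=1$ and the base case partitions $V$ into at most three incomplete intervals. The complement $R\setminus V$ is a final segment of $R$; any collection of disjoint complete subintervals of $R\setminus V$ together with $J_1\subseteq V$ yields disjoint complete intervals in $R$, so $I_{R\setminus V}(A)\le N-1$, and the induction hypothesis produces a partition of $R\setminus V$ into at most $3(N-1)$ incomplete intervals. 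Concatenating, I obtain a partition of $R$ into at most $3+3(N-1)=3N$ adjacent disjoint incomplete intervals.

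The principal obstacle is order-theoretic: because $R$ is an arbitrary linearly ordered set, natural suprema and infima need not lie in $R$ and a complete interval cannot always be shrunk to a canonical minimal one. The argument sidesteps these difficulties by working only with the finite witness set $\{r_\sigma\}_{\sigma\in\Sigma}$ (whose maximum automatically lies in $R$) and by defining $V$ in the purely order-theoretic form ``strictly below $J_2\cup\dots\cup J_N$'', which requires no completeness or compactness hypothesis on the order.
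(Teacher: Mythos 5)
Your argument is correct and arrives at the same $3I_R(A)$ bound by the same two-stage strategy as the paper --- a three-piece split for $I_R(A)=1$ followed by a reduction of the general case to it --- but both stages are executed differently. For the base case the paper works only from the left: it shows an incomplete left interval exists, takes $S_1$ to be the union of \emph{all} incomplete left intervals (itself incomplete, since left intervals form a chain under inclusion), peels it off, repeats twice more, and exhibits two disjoint complete intervals if a fourth piece were needed. Your symmetric device --- covering $R$ by $L^*$ and $R^*$ and cutting either at a point of the overlap or at the maximum of a finite witness set on the complete side --- is a genuinely different construction that makes the location of the cut explicit; the witness-max trick is a clean way around the absence of suprema in a general linear order, which is exactly the difficulty the paper's ``union of left intervals'' is built to avoid. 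For the general case the paper simply asserts that $R$ splits into $I_R(A)$ adjacent complete intervals each with $I_{Q_i}(A)=1$; as stated that is slightly off (the final piece may well be incomplete), whereas your peeling induction --- cut off the initial segment $V$ of everything strictly below $J_2\cup\dots\cup J_N$, then verify $I_V(A)=1$ and $I_{R\setminus V}(A)\le N-1$ by counting disjoint complete intervals --- supplies the missing argument and absorbs that degenerate tail automatically. Two small points to make explicit: you need \emph{strong} induction, since $I_{R\setminus V}(A)$ may be any value from $0$ to $N-1$; and, exactly as in the paper's proof, you rely on $\#\Sigma\ge 2$ so that singletons are incomplete --- both are harmless in context.
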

\begin{proof}
First, let $I_R(A)=1$. Let us say that the interval $S$ is a
{\em left interval} of $R$ if
$$
S=\cup_{r\in S}\{x\in R:x\le r\}.
$$
Let us show that there exists an incomplete left interval of $R$.
Suppose the contrary. Then $R$ has no minimal element.
Let us choose a subset $P=\{r_1,\dots,r_k\}\subseteq R$,
$r_1<r_2<\dots <r_k$ such that $A(P)=\Sigma$. The interval
$[r_1,r_2]$ is complete, and the interval $Q=\{x\in R:x<r_1\}$
is also complete, which is a contradiction.

Now, let $S_1$ be the union of all incomplete left intervals.
(it is, obviously, incomplete itself). Denote $R_1=R\backslash S_1$
and repeat the same argument to construct the interval $S_2$ as the
maximal incomplete left interval of $R_1$. If
$R_2\backslash (S_1\cup S_2)$ is nonempty, we repeat the same construction,
and so on. Suppose that $R_3$ is nonempty. Then choose some $r'\in S_2$
and $r''\in R_3$. Obviously, the intervals
$$
I_1=\{x\in R:x\le r'\}\quad{\rm and}\quad
I_2=S_3\cup \{x\in R_3:x\le r''\}
$$
are complete and disjoint. The contradiction finishes the proof
for $I_R(A)=1$. Finally, in the general case $m=I_R(A)<\infty$,
the set $R$ can be partitioned into $m$ disjoint adjacent
complete intervals $Q_i$ such that $I_{Q_i}(A)=1$, $i=1,\dots,m$,
and then each one of these intervals is partitioned into no more
than $3$ disjoint incomplete intervals.
\end{proof}

Let us now finish the definition of product (\ref{e31}).
If $I_R(A)<\infty$, we define this product by
partitioning $R$ into a finite number of adjacent disjoint
incomplete intervals; this is possible because of Lemma \ref{l3}.
If, on the contrary, $I_R(A)=\infty$, then we define
$M$ as the projection $P_J$ on $N_J$ along $\R_J$. Because of
transversality of $\Sigma$, the projection
$P_J(x)$ is well defined as a unique element
$y\in N_J$ satisfying $x-y\in R_J$.

Let us study basic properties of the map $M(S):2^R\to\Sigma$.
First, for finite sets $S$, we have $M(S)=A_{r_1}\dots A_{r_m}$,
where
$$
S=\{r_1,\dots,r_m\}\quad{\rm and}\quad r_1 < r_2 <\dots < r_m
$$
in the sense of the order on $R$.

\begin{lem}
Let $S_1,S_2\subseteq R$ and $S_1< S_2$, that is, $r_1< r_2$ for
each pair $r_1\in S_1$, $r_2\in S_2$. Then $M(S_1\cup S_2)=M(S_1)M(S_2)$.
\end{lem}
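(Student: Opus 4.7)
The plan is to proceed by induction on $K = \#A(S_1\cup S_2)$, the cardinality of the alphabet actually used, mirroring the recursion in the definition of $M$. The base cases $K = 0$ and $K = 1$ are transparent: the empty product is $I$, and for a single matrix $A$ the LCP hypothesis forces $A^n$ to converge to an idempotent $A_\infty$ satisfying $A A_\infty = A_\infty A = A_\infty^2 = A_\infty$, which makes the identity automatic.

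For the inductive step, set $J' = A(S_1\cup S_2)$ and split on $I := I_{S_1\cup S_2}(A)$, matching the two clauses of the recursive definition. If $I < \infty$, then $I_{S_i}(A) \le I$ is also finite, and Lemma \ref{l3} supplies partitions of $S_1$ and $S_2$ into adjacent incomplete intervals $U_1 < \dots < U_p$ and $V_1 < \dots < V_q$. Each $U_\alpha$ and each $V_\beta$ uses a strict subalphabet of $J'$, so the inductive hypothesis identifies $M(S_i)$ with the ordered matrix product of the $M(U_\alpha)$ respectively $M(V_\beta)$; the concatenated list $U_1,\dots,U_p,V_1,\dots,V_q$ is again an incomplete-interval partition of $S_1\cup S_2$, so the definition applied on the right yields $M(S_1\cup S_2) = M(U_1)\cdots M(U_p)M(V_1)\cdots M(V_q)$. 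Matrix associativity closes this case.

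Suppose instead $I = \infty$, so $M(S_1\cup S_2) = P_{J'}$. A short order-argument shows that at most one of the infinitely many disjoint complete intervals can straddle the interface between $S_1$ and $S_2$, so infinitely many lie entirely in one piece; say WLOG $I_{S_1}(A) = \infty$. Since a complete interval inside $S_1$ uses the full alphabet $J'$, we get $A(S_1) = J'$ and hence $M(S_1) = P_{J'}$. The identity then reduces to $P_{J'} M(S_2) = P_{J'}$. I would establish this via a parallel inductive statement on alphabet size: whenever $A(T)\subseteq J'$, the matrix $M(T)$ fixes $N_{J'}$ pointwise and leaves $R_{J'}$ invariant. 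Both properties are immediate for the generators $A_j$, $j\in J'$, they propagate through finite products, and they are inherited by the projections $P_{A(T)}$ produced at the $I=\infty$ levels of the recursion (using $N_{A(T)}\supseteq N_{J'}$ and $R_{A(T)}\subseteq R_{J'}$). Granting this, the transversality decomposition $y = u + v \in N_{J'}\oplus R_{J'}$ gives $P_{J'} M(S_2) y = P_{J'}(u + M(S_2)v) = u = P_{J'} y$. The symmetric subcase $I_{S_2}(A) = \infty$ reduces to $M(S_1) P_{J'} = P_{J'}$, which follows because $M(S_1)$ fixes $N_{J'}$ pointwise.

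The main obstacle is the $I = \infty$ case, where the projection $P_{J'}$ must ``absorb'' the subsequent continuous product. The delicate part is not the algebra of Lemma \ref{l3} and finite concatenation, but formulating the auxiliary invariance lemma for $N_{J'}$ and $R_{J'}$ cleanly enough to run on the same induction and to invoke transversality of $\Sigma$ precisely once, at the step where one uses the direct-sum decomposition.
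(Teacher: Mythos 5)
Your proof is correct and follows the same route as the paper's: a case split on whether $I_{S_1}(A)$ and $I_{S_2}(A)$ are finite or infinite, with Lemma \ref{l3} and associativity handling the finite case and the absorption identities $P_{J'}M(T)=M(T)P_{J'}=P_{J'}$ handling the infinite ones. Your auxiliary invariance lemma ($M(T)$ fixes $N_{J'}$ pointwise and preserves $R_{J'}$) is just the honest general form of the paper's one-line ``easy fact'' $P_JA_j=P_J$, and your induction on alphabet size is exactly the recursion already built into the definition of $M$, so this is the same argument executed more carefully rather than a genuinely different method.
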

\begin{proof}
Let us consider four possible cases:

\noindent (i) $I_{S_1}(A)<\infty$ and $I_{S_2}(A)<\infty$,

\noindent (ii) $I_{S_1}(A) = \infty$ and $I_{S_2}(A)<\infty$,

\noindent (iii) $I_{S_1}(A) < \infty$ and $I_{S_2}(A) = \infty$,

\noindent (iv) $I_{S_1}(A) = \infty$ and $I_{S_2}(A) = \infty$.

For instance, if
$I_{S_1}(A)<\infty$ and $I_{S_2}(A)=\infty$, we have
$I_S(A)=\infty$ and $M(S)=P_J$. Then the required statement
follows from the easy fact $P_J A_j=P_J$ for any $A_j\in\Sigma$.
The remaining cases are also obvious.
\end{proof}

More generally, suppose $F$ is a monotone map from $R$ to another
linearly ordered set $Q$, that is, $r_2\ge r_1$ implies
$F(r_2)\ge F(r_1)$. Define $M'(q)=M(F^{-1}(Q))$ for each
$q\in Q$.

\begin{thm}
For each subset $S'\subseteq Q$, we have
$$
M'(S')=M(F^{-1}(S')).
$$
\end{thm}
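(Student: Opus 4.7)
The natural strategy is to imitate the inductive construction of the continuous product and reduce the equality to the multiplicativity lemma from the previous section. I would split the argument into a finite case, which does all the real algebraic work, and an infinite case, which is a bookkeeping exercise using Lemma~\ref{l3}.

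For finite $S' = \{q_1 < q_2 < \dots < q_m\} \subseteq Q$, the monotonicity of $F$ ensures that whenever $q_i < q_j$ in $Q$, every element of $F^{-1}(q_i)$ lies strictly below every element of $F^{-1}(q_j)$ in $R$. Hence $F^{-1}(S') = F^{-1}(q_1) \sqcup \dots \sqcup F^{-1}(q_m)$ is an ordered disjoint union in the sense of the preceding lemma. Applying that lemma $m-1$ times yields
\[
M(F^{-1}(S')) = M(F^{-1}(q_1)) \cdots M(F^{-1}(q_m)) = M'(q_1) \cdots M'(q_m) = M'(S'),
\]
which is the desired identity on finite subsets.

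For general $S' \subseteq Q$, I would unwind the iterative definition on both sides simultaneously. The key dichotomy is whether $I_{F^{-1}(S')}(A)$ is finite or infinite. If $I_{F^{-1}(S')}(A) = \infty$ then by definition $M(F^{-1}(S')) = P_J$; I would argue that $I_{S'}(M') = \infty$ as well, so $M'(S') = P_J$ too, using that infinitely many disjoint complete $A$-intervals in $R$ project under $F$ to disjoint intervals in $Q$ whose $M'$-values cover $\Sigma$ (here one exploits that $M'(q) = M(F^{-1}(q))$ inherits the $A$-values carried by the fiber). If instead $I_{F^{-1}(S')}(A) < \infty$, Lemma~\ref{l3} partitions $F^{-1}(S')$ into finitely many adjacent incomplete intervals $T_1 < \dots < T_N$; by monotonicity of $F$ these collapse to an adjacent partition $S' = S'_1 \sqcup \dots \sqcup S'_N$ in $Q$ with $F^{-1}(S'_i) = T_i$. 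Applying the multiplicativity lemma on both sides reduces the identity to $M(T_i) = M'(S'_i)$ for each $i$, which in turn is an instance of the same statement on incomplete intervals, treated by reducing to a strictly smaller alphabet and invoking induction on $\#J$ as in the proof of Theorem~\ref{t2}.

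The main obstacle I expect is the careful verification of the dichotomy $I_{S'}(M') = \infty \iff I_{F^{-1}(S')}(A) = \infty$, because $M'(q)$ need not belong to $\Sigma$ and so the meaning of ``complete interval'' for $M'$ requires interpretation. I would address this by tracing back to the underlying alphabet: a fiber $F^{-1}(q)$ records which $A_j \in \Sigma$ appear on it, and an interval $T \subseteq S'$ should be declared complete precisely when $A(F^{-1}(T)) = \Sigma$. With this convention the equivalence becomes tautological, and the two inductive definitions of $M$ and $M'$ unfold in lockstep, making the rest of the argument a mechanical consequence of the finite case and the multiplicativity lemma.
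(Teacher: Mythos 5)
Your proposal follows essentially the same route as the paper, whose own proof is only a two-line sketch: induction on $\#J$ combined with the dichotomy $I_{F^{-1}(S')}(A)=\infty$ versus $I_{F^{-1}(S')}(A)<\infty$, with everything else read off from the definitions and the multiplicativity lemma. Your version simply supplies the details the paper omits (notably the convention that an interval $T\subseteq S'$ is complete exactly when $A(F^{-1}(T))=\Sigma$, which resolves the only genuine ambiguity), and it is correct.
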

\begin{proof}
Let us use induction on $k=\# J$. It suffices to
consider the cases $I_{F^{-1}(S')}(A)=\infty$
and $I_{F^{-1}(S')}(A)<\infty$. In each case, the required
assertion follows directly from the definitions.
\end{proof}

The following assertion, again, follows from elementary
induction considerations.

\begin{thm}\label{l12}
Any $M(S)$ can be represented as a limit of matrices $M_i\in {\mathcal
L}(\Sigma)$ as $i\to\infty$.
\end{thm}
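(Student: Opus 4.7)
The plan is to induct on the cardinality $k=\#J$, in parallel with the recursive construction of $M(S)$ itself. The base case $k=1$ is immediate: $M(S)$ is then either a power $A_{1}^{\#S}$ (for finite $S$) or $\lim_{m\to\infty}A_{1}^{m}$ (for infinite $S$, a limit that exists by LCP), and in both cases the value is already a limit of elements of $\mathcal{L}(\Sigma)$.

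For the inductive step, I assume the assertion for all matrix families of cardinality less than $k$ and fix $\Sigma$ with $\#J=k$. The argument splits according to whether $I_{S}(A)$ is finite or infinite. If $I_{S}(A)<\infty$, I apply Lemma \ref{l3} to $S$ in place of $R$, obtaining a partition $S=S_{1}\cup\dots\cup S_{p}$ into finitely many disjoint adjacent incomplete intervals. Each image $A(S_{i})$ is a proper subset of $\Sigma$, so the induction hypothesis supplies, for each $i$, a sequence $M_{n}^{(i)}\in\mathcal{L}(A(S_{i}))\subseteq\mathcal{L}(\Sigma)$ with $M_{n}^{(i)}\to M(S_{i})$. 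Continuity of matrix multiplication then gives $M_{n}^{(1)}\cdots M_{n}^{(p)}\to M(S_{1})\cdots M(S_{p})=M(S)$, and each approximant lies in $\mathcal{L}(\Sigma)$ as a product of words from $\Sigma$.

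If instead $I_{S}(A)=\infty$, then $M(S)=P_{J}$ by definition, and the task is to exhibit $P_{J}$ as a norm limit of genuine words in $\Sigma$. I would pick any one-sided infinite sequence $j_{1},j_{2},\dots$ in $J$ in which every index recurs infinitely often, and pass to coordinates adapted to the splitting $\R^{n}=N_{J}\oplus R_{J}$ (available by transversality). In these coordinates each $A_{j}$ becomes block diagonal with an identity block on $N_{J}$ and a block $\tilde A_{j}$ on $R_{J}$; the family $\tilde\Sigma=\{\tilde A_{j}\}$ is LCP with $\tilde N_{J}=\{0\}$, so Theorem \ref{bels} applied to $\tilde\Sigma$ forces $\tilde A_{j_{m}}\cdots\tilde A_{j_{1}}\to 0$. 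Transporting this back yields $A_{j_{m}}\cdots A_{j_{1}}\to P_{J}$, which is the required approximation by elements of $\mathcal{L}(\Sigma)$.

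The main obstacle is this last case: recognising the abstract projection $P_{J}$ as an actual norm limit of elements of $\mathcal{L}(\Sigma)$ rather than as a symbol attached by convention to infinite $I_{S}(A)$. The block reduction together with Theorem \ref{bels} handles it; the remaining parts of the proof are straightforward bookkeeping via the recursive definition of $M(S)$ and the continuity of matrix multiplication.
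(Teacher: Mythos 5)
Your argument is correct and follows exactly the route the paper intends: the paper offers no written proof beyond the remark that the statement ``follows from elementary induction considerations,'' and your induction on $\#J$ --- splitting on $I_S(A)<\infty$ (Lemma \ref{l3} plus continuity of multiplication) versus $I_S(A)=\infty$ (block decomposition along $N_J\oplus R_J$ and Theorem \ref{bels} to get $A_{j_m}\cdots A_{j_1}\to P_J$) --- supplies precisely those details. No gaps.
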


\begin{thm}\label{t6}
The matrix-valued function $M(S^i(r))$ on $R$ has bounded variation
for $i=1,\dots,4$,
where $S_1(r)=\{p\in R:p\le r\}$,
$S_2(r)=\{p\in R:p < r\}$,
$S_3(r)=\{p\in R:p \ge r\}$,
and $S_4(r)=\{p\in R:p> r\}$.
\end{thm}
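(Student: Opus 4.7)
The plan is to prove the case $i=1$ in detail; the cases $i=3,4$ are symmetric (swapping LCP for RCP), and $i=2$ differs from $i=1$ only cosmetically in whether the split is half-open on the left or the right.

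Fix any finite sample $r_1<r_2<\dots<r_N$ in $R$, set $T_0=S_1(r_1)$ and $T_k=\{p\in R:r_k<p\le r_{k+1}\}$ for $k=1,\dots,N-1$. Then $T_0<T_1<\dots<T_{N-1}$, their union is $S_1(r_N)$, and iterating the preceding concatenation lemma gives
\[
M(S_1(r_{k+1}))=M(S_1(r_k))\cdot M(T_k),\qquad k=0,\dots,N-1.
\]
The aim is a uniform bound $\sum_{k=1}^{N-1}\|M(S_1(r_{k+1}))-M(S_1(r_k))\|\le C$ with $C$ depending only on $\Sigma$.

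By Theorem~\ref{l12} every $M(T_k)$ is a limit in $\mathcal{L}(\Sigma)$, so for any $\e>0$ I can pick $P_k\in\mathcal{L}(\Sigma)$ with $\|P_k-M(T_k)\|<\e$. Concatenating the index strings of $P_0,P_1,\dots,P_{N-1}$ into a single sequence $S=(j_1,j_2,\dots)$ from $J$ turns the partial products $Q_k:=P_0\cdots P_{k-1}$ into selected values of a single right-extending product $R_S^m=A_{j_1}\dots A_{j_m}$. For fixed $N$, a standard telescoping estimate using product boundedness of $\mathcal{L}(\Sigma)$ shows $\|Q_k-M(S_1(r_k))\|\to 0$ as $\e\to 0$, so it suffices to bound $\sum_{k=1}^{N-1}\|Q_{k+1}-Q_k\|$ uniformly by the total variation of the matrix sequence $\{R_S^m\}_m$.

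The crux is therefore the claim that RCP implies a uniform matrix-level bound $\sum_m\|R_S^{m+1}-R_S^m\|\le C$ for every index sequence $S$. I would obtain this in three steps: Proposition~2.2 reduces the problem to the analogous LCP statement for $\Sigma^*$; the equivalence in \cite{VlaSt9912} between LCP and bounded variation of all vector paths furnishes, for each $S$ and each unit vector $v$, a finite total variation $\sum_m\|L_S^{m+1}v-L_S^mv\|$; and Theorem~\ref{bels}, applied after decomposing $\R^n=N_J\oplus R_J$ by transversality so that on $R_J$ the matrices are contractive in a suitable norm with complete blocks contracting by $q<1$, upgrades this to a uniform geometric bound independent of $S$ and $v$. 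Summing over a basis of $\R^n$ via $\|A\|\le\sqrt n\,\max_l\|Ae_l\|$ and transposing back yields the uniform bound on $\{R_S^m\}$. The main obstacle is precisely this uniformization step; everything else is routine manipulation of the concatenation lemma and Theorem~\ref{l12}.
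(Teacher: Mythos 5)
Your proof follows essentially the same route as the paper's: both reduce to a finite sample of $R$, use the concatenation lemma together with Theorem \ref{l12} to approximate the interval products by partial products of a single finite word from ${\mathcal L}(\Sigma)$, and then invoke a uniform bound $V$ on the variation of all finite products of matrices from $\Sigma$. The only difference is that the paper simply cites this uniform bound from \cite{VlaSt9912}, whereas you sketch its derivation via duality, the decomposition $\R^n=N_J\oplus R_J$, and Theorem \ref{bels} --- a mechanism consistent with what the paper itself deploys later in the proof of Theorem \ref{tt4}.
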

\begin{proof}
As is known \cite{VlaSt9912}, there exists an upper bound $V$
on the variation
$$
V(\{A_i\})=\sum_{i=1}^{m-1}\|M_i-M_{i+1}\|,\quad{\rm where\ }%
M_i=A_1\dots A_i,
$$
of any finite product $A_1\dots A_m$, $A_i\in\Sigma$, $i=1,\dots,m$. Let us
consider a finite partition of $R$ into intervals $S_i$, $i=1,\dots,m$, and
prove that the same bound is valid for the product $M(S_1)\dots M(S_m)$.
Indeed, by Theorem \ref{l12}, any $M(S)$ can be approximated by finite
products from ${\mathcal L}(\Sigma)$ with arbitrary precision, and, hence
$$
\sum_{i=1}^{m-1}\|M(S_{i+1})-M(S_i)\| \le V.
$$
\end{proof}

\begin{thm}\label{t4}
The matrix $M(R)$ can be found as an inductive limit of all finite products
$M(G)$, $G\subseteq R$, that is, For each $\e>0$, there exists a finite set
$F\subseteq R$ such that
$$
\|M(F')-M(R)\|<\e \quad{\rm for\ all\ finite\ } F'\supseteq F.
$$
\end{thm}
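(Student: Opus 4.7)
The plan is to mirror the inductive definition of $M(R)$ and prove the statement by induction on $k = \#J$. The base case $k = 1$ is immediate: $\Sigma = \{A\}$ is LCP, so $A^m$ converges as $m \to \infty$ (and, by transversality, the limit equals $P_J$); then any finite $F \subseteq R$ of sufficiently large cardinality works, since $M(F') = A^{|F'|}$ for any $F' \supseteq F$.

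For the inductive step I split into the two cases used to define $M(R)$. In the first case $I_R(A) < \infty$, apply Lemma \ref{l3} to partition $R = S_1 \sqcup \dots \sqcup S_m$ into adjacent disjoint incomplete intervals, so that $A(S_i) \subsetneq \Sigma$ for each $i$. The induction hypothesis, applied to each $(S_i, A|_{S_i})$ with its strictly smaller effective index set, yields for any $\delta > 0$ a finite $F_i \subseteq S_i$ such that $\|M(F_i') - M(S_i)\| < \delta$ whenever $F_i' \subseteq S_i$ is finite with $F_i' \supseteq F_i$. Let $F = \bigcup_i F_i$; then for finite $F' \supseteq F$, writing $F_i' = F' \cap S_i$, one has $M(F') = \prod_i M(F_i')$ and $M(R) = \prod_i M(S_i)$, and a telescoping estimate using product boundedness (which LCP implies) bounds $\|M(F') - M(R)\|$ by $C^{m-1}\sum_i \|M(F_i') - M(S_i)\|$, made smaller than $\e$ by choosing $\delta$ small.

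In the second case $I_R(A) = \infty$, we have $M(R) = P_J$. Transversality gives $\R^n = N_J \oplus R_J$, and in a basis adapted to this decomposition every $A_j$ becomes block-diagonal with identity on $N_J$ and a block $\tilde A_j$ on $R_J$. The restricted system $\tilde\Sigma$ on $R_J$ is LCP with $\tilde N_J = \{0\}$, so Theorem \ref{bels} provides a norm in which $\|\tilde A_j\| \le 1$ and any product containing each $\tilde A_j$ at least once has norm at most $q < 1$. Given $\e > 0$, choose $m$ so that $q^m$ (times the norm-equivalence constant) is below $\e$, pick $m$ disjoint complete intervals $T_1 < \dots < T_m$ inside $R$, and, within each $T_i$, a finite subset $G_i$ with $A(G_i) = \Sigma$. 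Put $F = \bigcup_i G_i$. For any finite $F' \supseteq F$, the ordered product $M(F')$ factors along the $T_i$'s into $m$ sub-products each containing every matrix of $\Sigma$, interleaved with factors of norm at most one; on $N_J$ all factors act as identity, so $(M(F') - P_J)|_{N_J} = 0$, while on $R_J$ the contribution is at most $q^m$, giving $\|M(F') - P_J\| < \e$.

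The main obstacle I expect lies in Case 2, namely in justifying cleanly that an arbitrary finite refinement $F' \supseteq F$ really does split, along the $T_i$'s, into $m$ independent complete sub-products to which Theorem \ref{bels} applies; one must keep the interleaved non-complete factors on the gaps between the $T_i$'s under control using only the norm-$\le 1$ bound on $R_J$. Case 1 and the base case are essentially bookkeeping around Lemma \ref{l3} and product boundedness.
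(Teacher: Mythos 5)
Your proof is correct and follows essentially the same route as the paper, which disposes of the theorem in two lines: the case $I_R(A)=\infty$ is declared ``obvious'' (your Theorem~\ref{bels} contraction argument with the blocks $B_i\supseteq G_i$ is exactly the justification needed) and the case $I_R(A)<\infty$ is handled by induction on $\#J$, just as you do via Lemma~\ref{l3} and a telescoping estimate. You have simply supplied the details the paper omits; the concern you raise about splitting $M(F')$ along the $T_i$'s is resolved exactly as you sketch, by partitioning $F'$ into consecutive blocks each containing one $G_i$.
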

\begin{proof}
If $I_R(A)=\infty$, this is obvious. If $I_R(A)<\infty$, we
use the induction on $\# J$ again.
\end{proof}

Hence, the product $M(R)$ can also be defined by means of the following
formal constructions. Let ${\mathcal F}$ be the family of all finite subsets
$F\subseteq R$. Considering ${\mathcal F}$ as a directed set with respect to
the order relation $F\le F'\Leftrightarrow F\subseteq F'$, we can define
$M(R)$ as the limit of the net $\{M(F):F\in{\mathcal F}\}$.

\section{Insertions}

The LCP property means that if, at each discrete time instant,
a matrix from $\Sigma$ is added to the current product at the left,
then the resulting procedure converges. For RCP {\em left} should be
replaced by {\em right}. It is also easy to see that, for a CP set $\Sigma$,
one can add matrices alternately at the left and at the right, in
any sequence, and the resulting product still converges.
Now, we are going to prove that, for CP sets, this procedure
can be generalized so that any subsequent
matrix can be inserted at any place of the current product,
the front and the rear positions included.

\begin{thm}\label{t5}
Let a sequence $M_i\in\Sigma^i$, $i=1,\dots$, possess the following
property. For each $i$, there exists an $m_i$, $0\le m_i\le i$ such that
$$
M_i=M_- M_+\quad {\rm and}\quad M_{i+1}=M_- A_{j_i} M_+,
$$
where $M_-\in \Sigma^{m_i}$, $M_+\in \Sigma^{i-m_i}$, and $A_{j_i}\in\Sigma$.
Then $M_i$ converges to some $n\by n$-matrix as $i\to\infty$.
\end{thm}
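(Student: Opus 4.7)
My plan is to embed the insertion process into the continuous-product framework of Section~4 and then invoke Theorem~\ref{t4}. The idea is to assign to each matrix, as it enters the product, a real-valued ``position'' $t_i\in\R$, and to view the sequence $(M_i)$ as the sequence of finite evaluations of a continuous product over a countable linearly ordered set $R\subset\R$.

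Set $t_1:=0$, $R_1:=\{t_1\}$, and define $A(t_1)$ to be the single matrix of $M_1$. Inductively, assume that after $i$ steps we have a finite set $R_i=\{t_1,\dots,t_i\}\subset\R$ whose elements, sorted increasingly as $s_1<\dots<s_i$, satisfy $M_i=A(s_1)\dots A(s_i)$. When we pass from $M_i$ to $M_{i+1}=M_-A_{j_i}M_+$ with $M_-=A(s_1)\dots A(s_{m_i})$ and $M_+=A(s_{m_i+1})\dots A(s_i)$, pick any $t_{i+1}\in\R$ strictly between $s_{m_i}$ and $s_{m_i+1}$ (and strictly below $s_1$ or strictly above $s_i$ in the boundary cases $m_i=0$ or $m_i=i$), set $A(t_{i+1}):=A_{j_i}$, and put $R_{i+1}:=R_i\cup\{t_{i+1}\}$. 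By construction $M_{i+1}$ coincides with the finite product $M(R_{i+1})$ as defined at the start of Section~4.

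Now let $R:=\bigcup_i R_i\subset\R$, ordered as a subset of $\R$, with $A:R\to\Sigma$ as constructed. Since $\Sigma$ is CP, the matrix $M(R)$ is well defined, and Theorem~\ref{t4} asserts that for every $\e>0$ there exists a finite $F\subseteq R$ such that $\|M(F')-M(R)\|<\e$ whenever $F'\supseteq F$ is finite. Because $F$ is finite and $R=\bigcup_i R_i$ is an increasing exhaustion, we have $R_i\supseteq F$ for all sufficiently large $i$, and therefore $\|M_i-M(R)\|=\|M(R_i)-M(R)\|<\e$ eventually. Hence $M_i\to M(R)$ as $i\to\infty$.

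The only subtlety is the embedding step: one must check that real positions can always be chosen consistently so that the abstract left-to-right order of the factors of each $M_i$ agrees with the order inherited from $\R$, and that the inductive choice never collides with an earlier $t_k$. This is an elementary midpoint construction, with no obstruction, and the whole combinatorial content of the theorem is then absorbed into Theorem~\ref{t4} and the CP hypothesis on $\Sigma$.
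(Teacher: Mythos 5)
Your proposal is correct and follows essentially the same route as the paper: both embed the insertion process into a countable subset $R\subset\R$ by realizing the left-to-right order of the factors with real positions (the paper via an abstract order $\succ$ on indices subsequently realized by reals $x_i$, you via a direct midpoint construction), and both then conclude by Theorem \ref{t4}. Your version merely spells out more explicitly the final step that $R_i\supseteq F$ eventually, which the paper leaves implicit.
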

\begin{proof}
Let us introduce a linear order relation $\succ$ on the set of indices
$1,2,\dots$ as follows. We will write $i'\succ i''$ if the matrix
$A_{j_{i'}}$ takes position to the right of $A_{j_{i''}}$ in the product
$M_i$, where $i=\max\{i',i''\}$.

Let us choose a countable number of reals $x_i$ such that $x_i>x_j$ if and
only if $i\succ j$. This kind of choice is possible because, if at step $i$
the set $\{x_j:j\succ i\}$ satisfies this requirement, the next point
$x_{i+1}$ can also be chosen in a way that the requirement still holds for
the set $\{x_j:j\succ i+1\}$. The assertion of the theorem follows now from
Theorem \ref{t4}, where $R=\{x_i:i=1,2,\dots\}$ equipped with the order
induced by the natural order on $\R$, and $A(x_i)=A_{j_i}$.
\end{proof}

Moreover, the following stronger assertion holds.

\begin{thm}\label{tt4}
There exists a uniform upper bound $V$ on the variation of
any sequence $M_i$ from the hypothesis of Theorem \ref{t5}.
\end{thm}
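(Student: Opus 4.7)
I would reuse the construction from the proof of Theorem \ref{t5}: choose a countable set $R=\{x_1,x_2,\dots\}\subset\R$ (with the linear order induced from $\R$) together with $A(x_i)=A_{j_i}$, so that $M_i=M(F_i)$ with $F_i=\{x_1,\dots,x_i\}$. Passing from $M_i$ to $M_{i+1}$ adjoins a single point $x_{i+1}$, yielding the factorisation $M_i=L_iR_i$ and $M_{i+1}=L_i A(x_{i+1}) R_i$ with $L_i=M(F_i\cap\{p<x_{i+1}\})$ and $R_i=M(F_i\cap\{p>x_{i+1}\})$, hence
$$
M_{i+1}-M_i=L_i\bigl(A(x_{i+1})-I\bigr)R_i.
$$
Product boundedness (a consequence of LCP, \cite{BerBo9221}) supplies $C=C(\Sigma)$ with $\|L_i\|,\|R_i\|\le C$, so each summand of $\sum_i\|M_{i+1}-M_i\|$ is bounded by the corresponding one-step $R$-increment of the finite left-prefix function $\Phi_{i+1}(r):=M(F_{i+1}\cap\{p\le r\})$, namely
$$
\|M_{i+1}-M_i\|\le C\,\bigl\|\Phi_{i+1}(x_{i+1})-\Phi_{i+1}(x_{i+1}-)\bigr\|.
$$

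For each fixed $i$ the function $\Phi_{i+1}$ is the left-prefix sequence of a finite ordered product, so the variation bound of \cite{VlaSt9912} invoked inside Theorem \ref{t6} produces a universal constant $V=V(\Sigma)$ with $\sum_{y\in F_{i+1}}\|\Phi_{i+1}(y)-\Phi_{i+1}(y-)\|\le V$. What remains is to sum the single jumps at the freshly added points $x_{i+1}$ over $i$, despite the fact that each $\Phi_{i+1}$ is a different prefix function. I would handle this by induction on $k=\#J$, in parallel with the inductions used in Theorems \ref{t2} and \ref{t5}. For $k=1$ the sequence degenerates to $A^i$ and the CP assumption turns its variation into a convergent geometric series. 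For $k\ge 2$ I would split into: (a) the generic case in which every $A_j$ appears among the $\{A_{j_i}\}$ infinitely often, where Theorem \ref{bels} supplies a norm $\|\cdot\|_{\Sigma}$ in which every finite block containing all of $\Sigma$ contracts by a factor $q<1$, forcing geometric decay of the blockwise increments; and (b) the case in which some $A_j$ is used only finitely often, where after a finite initial segment the insertions live inside a strictly smaller subfamily and the inductive hypothesis applies directly.

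The principal obstacle is the combinatorial mismatch between the \emph{insertion order} (the index $i$) and the intrinsic \emph{$R$-order} of the points $x_i$: the variation bound from \cite{VlaSt9912} and Theorem \ref{t6} directly controls variations along chains indexed by $R$-order, whereas our chain $F_1\subset F_2\subset\cdots$ is indexed by insertion time. A purely analytic alternative would approximate each finite prefix $\Phi_{i+1}$ by the global prefix $\Phi(r)=M(\{p\in R:p\le r\})$ via Theorem \ref{l12} and then invoke the bounded-variation statement of Theorem \ref{t6} once; but even that route has to absorb the same mismatch. Either path produces a uniform bound $V'=V'(\Sigma)$, independent of the particular sequence $\{M_i\}$ satisfying the hypotheses of Theorem \ref{t5}.
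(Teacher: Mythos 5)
Your proposal assembles the right ingredients (the ordered set $R$ from Theorem \ref{t5}, induction on $\#J$, the contracting norm of Theorem \ref{bels}), but it does not close the argument, and its opening reduction points in the wrong direction. The inequality $\|M_{i+1}-M_i\|\le C\,\|\Phi_{i+1}(x_{i+1})-\Phi_{i+1}(x_{i+1}-)\|$ is true but useless: bounding $\|L_i(A(x_{i+1})-I)R_i\|$ by $C\|L_i(A(x_{i+1})-I)\|$ discards the factor $R_i$, which is exactly where the decay lives in the most basic instance of the theorem. If every insertion is at the far left (the pure LCP situation), then $L_i=I$, the jump $\|L_i(A(x_{i+1})-I)\|=\|A(x_{i+1})-I\|$ does not decay, and the sum of your upper bounds diverges even though $\sum_i\|M_{i+1}-M_i\|$ converges. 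This is on top of the mismatch you yourself flag as ``the principal obstacle'': the bound from \cite{VlaSt9912} controls the sum of all jumps of one fixed prefix function, not the sum over $i$ of one jump each of infinitely many different prefix functions. Identifying an obstacle is not the same as overcoming it, and the case split (a)/(b) you then sketch does not overcome it.

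The paper's proof closes the gap with a combinatorial device absent from your sketch: it groups the insertion steps by $k(j)=I_{R_j}(A)$, the number of disjoint \emph{complete} intervals in the current point set $R_j$. During the phase where $k(j)=m$, the set $R_{l_m}$ splits into $m$ adjacent complete intervals, each of which Lemma \ref{l3} partitions into at most $3$ incomplete subintervals; every insertion in this phase lands in one of these at most $3m$ incomplete subintervals, the induction hypothesis is applied to the proper subfamily realized on that subinterval (not to a tail of the insertion sequence), bounding the variation generated inside it by $B_0$, and the remaining $m-1$ complete intervals surrounding it supply the factor $q^{m-1}$ from Theorem \ref{bels}. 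This gives the summable series $\sum_m 3mB_0q^{m-1}$. Your case (b) also cannot deliver a \emph{uniform} bound: ``after a finite initial segment'' is not uniform over sequences, and even past the last occurrence of some $A_j$ the increments $L(A-I)R$ still contain the earlier occurrences of $A_j$ inside $L$ and $R$, so the induction hypothesis does not ``apply directly'' to the tail. The induction must be applied to the incomplete subintervals, as above.
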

\begin{proof}
Let us again use induction on $\# J$ and suppose that the assertion\
is proved for all proper subsets of $\Sigma$ (denote the corresponding
upper bound by $B_0$). Let $R$ be the set defined
in the proof of Theorem \ref{t5} and $A(x_i)=A_{j_i}$.
Denote by $R_j$ the subset
$\{x_i:i=1,\dots,j\}$ of $R$. Denote also $k(j)=I_{R_j}(A)$, $j=1,2,\dots$.
The sequence $k(j)$ is nondecreasing and $k(1)=0$ if $\# J>1$.
Now, denote by $l_m$ the maximal index $j$ such that $k(j)=m$
(if it exists).

Let us first find an upper bound on the variation of the finite
sequence $M_1,\dots M_{l_1}$. By Lemma \ref{l3}, the set $R_{l_1}$ can be
partitioned into $3$ incomplete intervals. At each step $i<l_1$,
the matrix $A_{j_i}$ is inserted into one of these subintervals.
By the induction assumption, variation of the product matrix for
each subinterval does not exceed $B_0$ and, hence, because of
(\ref{st1}), the variation of $\{M_1,\dots, M_{l_1}\}$
does not exceed $3B_0$.

Now, let us find an upper bound for the variation of
$$
{\mathcal M}_m=\{M_{l_{m-1}+1},\dots,M_{l_m}\}
$$
for an arbitrary $m$. The set $R_{l_m}$ can be divided into $m$ disjoint
adjacent intervals $S_i$ such that $I_{S_i}(A)=1$, $i=1,\dots,m$. Whenever a
matrix is insereted into one of these intervals, the variation of the whole
product at this step does not exceed $q^{m-1} V'$, where $V'$ is the
variation of the current product, at which the insertion occurs ($q<1$ is the
constant from Theorem \ref{bels}). Thus, the variation of ${\mathcal M}_m$ is
bounded from the above by $s_m=3mq^{m-1}$. But the sum $\sum_{m=1,\dots}s_m$
is bounded since $q<1$, hence, the theorem is proved.
\end{proof}

Let us formulate a generalization of Theorem \ref{tt4}.
The proof is completely analogous to that of Theorem \ref{tt4},
so we leave it to the reader.

\begin{thm}
Let $R$ be a linearly ordered set and let $A$ be a map from $R$
to a CP set $\Sigma$. Suppose $Q$ is another linearly ordered
set and $B$ is a monotone map from $Q$ to $2^R$, that is,
$B(q_2)\supseteq B(q_1)$ whenever $q_2\ge q_1$. Then the map
$M'(q)=M(B(q))$ possesses the bounded variation property
in the sense of Theorem \ref{t6}.
\end{thm}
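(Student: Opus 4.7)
The plan is to reduce the statement to Theorem \ref{tt4} via a nested finite approximation. Let $V$ be the uniform variation bound produced by Theorem \ref{tt4}. Since the variation of $M'$ on $Q$ is the supremum of $\sum_{i=1}^{n-1}\|M'(q_{i+1})-M'(q_i)\|$ over finite chains $q_1<q_2<\dots<q_n$ in $Q$, it is enough to fix such a chain and show that this sum is bounded by $V$.

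Given any $\e>0$, I would first invoke Theorem \ref{t4} applied to the linearly ordered sets $B(q_i)\subseteq R$: for each $i$ this furnishes a finite set $F_i^{\,*}\subseteq B(q_i)$ such that $\|M(F)-M(B(q_i))\|<\e$ for every finite $F$ with $F_i^{\,*}\subseteq F\subseteq B(q_i)$. I would then define nested approximants by $F_1=F_1^{\,*}$ and $F_{i+1}=F_{i+1}^{\,*}\cup F_i$. Monotonicity of $B$ gives $F_{i+1}\subseteq B(q_{i+1})$, and the stabilization property just cited ensures $\|M(F_i)-M(B(q_i))\|<\e$ for every $i$, while by construction $F_1\subseteq F_2\subseteq\dots\subseteq F_n$.

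Next I would enumerate the elements of $F_n$ as $r_1,r_2,\dots,r_N$ in any order that respects the nesting: all elements of $F_1$ first, then those of $F_2\setminus F_1$, then $F_3\setminus F_2$, and so on. Setting $M_j=M(\{r_1,\dots,r_j\})$, each $M_{j+1}$ is obtained from $M_j$ by inserting the single factor $A(r_{j+1})$ at the position dictated by the order of $R$, which is exactly the setting of Theorems \ref{t5} and \ref{tt4}. Hence the total variation of $\{M_j\}_{j=0}^{N}$ is at most $V$, and in particular so is the variation of its subsequence $M(F_1),M(F_2),\dots,M(F_n)$. A triangle inequality then yields
$$\sum_{i=1}^{n-1}\bigl\|M(B(q_{i+1}))-M(B(q_i))\bigr\|\le V+2(n-1)\e,$$
and letting $\e\to 0$ with $n$ fixed delivers the required bound.

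The only step that actually demands care is the construction of the nested approximants $F_i$; everything else is formal. Two ingredients combine there: monotonicity of $B$ is what legitimizes the union $F_{i+1}=F_{i+1}^{\,*}\cup F_i$ as a subset of $B(q_{i+1})$, and it is the \emph{stabilization} form of Theorem \ref{t4}, rather than mere existence of a finite approximant, that allows enlargement of $F_i^{\,*}$ without sacrificing the $\e$-precision at step $i$. This is in keeping with the author's remark that the argument is completely analogous to that of Theorem \ref{tt4}.
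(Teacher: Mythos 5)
Your proof is correct, but it takes a different route from the one the paper intends. The paper omits the proof with the remark that it is ``completely analogous to that of Theorem \ref{tt4}'', i.e.\ one is expected to re-run the induction on $\#J$: track $k(q)=I_{B(q)}(A)$ as a nondecreasing function of $q$, partition the relevant portion of $R$ into complete intervals, and sum the geometric series $\sum_m 3mq^{m-1}$ coming from Theorem \ref{bels}. You instead treat Theorem \ref{tt4} as a black box and give a formal reduction: approximate each $M(B(q_i))$ along a finite chain $q_1<\dots<q_n$ by nested finite sets $F_1\subseteq\dots\subseteq F_n$ (using the stabilization form of Theorem \ref{t4} together with monotonicity of $B$ --- you are right that this is the one delicate point, since mere existence of a finite approximant would not let you enlarge $F_i^{\,*}$ to $F_i$), realize $F_1\subseteq\dots\subseteq F_n$ as a subsequence of a single insertion sequence in the sense of Theorem \ref{t5}, apply the uniform bound $V$ of Theorem \ref{tt4}, and let $\e\to 0$ at fixed $n$. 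All the individual steps check out: $F_i\subseteq B(q_i)$ follows by induction from monotonicity, the enumeration of $F_n$ respecting the nesting does produce a legitimate insertion sequence, and the variation of a subsequence is dominated by that of the full sequence. What your approach buys is economy --- no repetition of the inductive machinery --- plus the explicit observation that the generalization is a formal consequence of Theorems \ref{t4} and \ref{tt4} and inherits the same constant $V$; what the paper's parallel re-derivation would buy is independence from Theorem \ref{t4}, whose own proof in the paper is only sketched.
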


\section{Linear control systems}

In this section we assume that $R$ has the minimal element $r_-$
and the maximal element $r_+$.
Suppose $f(r)$ is a bounded function from $R$ to $\R^n$.
Let us define the integral
\begin{equation}\label{ee1}
\int_R f(r)dM(r), \quad {\rm where\ } M(r)=M(\{s\in R:s\le r\}),
\end{equation}
as follows. For each finite subset $F\subseteq R$ of the form
$$
F=\{r_1,\dots,r_m\},\quad r_1=r_- <r_2 <\dots < r_{m-1}< r_m=r_+,
$$
define
$$
H(F)=\sum_{i=1}^{m-1} (M(r_{i+1})-M(r_i))f(r_i).
$$
Then define
\begin{equation}\label{ee2}
\int_R f(r)dM(r) = \lim_{F\to R} H(F),
\end{equation}
where the limit for the net of all finite subsets
of $R$ containing $r_-$ and $r_+$ is considered in the right-hand
side. The expression (\ref{ee2}) is well defined because of
Theorem \ref{t6}.

The integral (\ref{ee2}) is an anlogue of the standard
Lebesgue--Stiltjes integral.
Finally, let us define formally

\begin{equation}\label{e32}
\int_R M(r)df(r)=M(r_+)f(r_+)-M(r_-)f(r_-)-\int_R f(r)dM(r).
\end{equation}
The variable integral $x(r)=\int_{R_r} M(s)df(s)$, $r\in R$, can be
interpreted as the output of a linear control system
with the input $f(r)$, $r\in R$.

\section{Recognizing CP property}

Let us say a few words on the general structure of CP families and on the
hardness of their recognition. First of all, the {\em finiteness property}
holds for CP families that have $\rho(\Sigma)=1$. Moreover, if
$\rho(\Sigma)=1$, then there exists a matrix $A\in\Sigma$ such that
$\rho(A)=1$.

Indeed, if this is not the case, there exists a sequence $A_{i}\in\Sigma$
that contains infinite number of entries of at least two different matrices
$A$ and $B$ and such that $\lim_{k\to\infty}A_{k}\dots A_{1}\ne 0$. Then any
non-zero column of the limit matrix is invariant for $A$ and $B$ which is
impossible by assumption. For each $A\in \Sigma$ the limit $\lim_{m\to\infty}
A^{m}$ exists.

Thus we have two options for $\Sigma$. If $\rho(\Sigma)<1$, it is a CP
family. The hardness of recognition of this case is still an open problem as
far as we know. It is conjectured that the problem is algorithmically
unsolvable for matrices with rational entries, the same way as it happens for
a similar problem $\rho(\Sigma)\le 1$, see \cite{BloTh0013}.

The second case is $\rho(\Sigma)=1$ and then the hardness of recognition is
the same as in the first case. Indeed, let $\Sigma=\{A,B,C\}$, where
\[
A=\left(
    \begin{array}{ccc}
      E & 0 & 0 \\
      M_{1} & 0 & 0 \\
      M_{1} & 0 & 0 \\
    \end{array}
  \right),\quad
  B=\left(
    \begin{array}{ccc}
      0 & M_{2} & 0 \\
      0 & E & 0 \\
      0 & 0 & 0 \\
    \end{array}
  \right),\quad
  C=\left(
    \begin{array}{ccc}
      0 & 0 & M_{3} \\
      0 & 0 & 0 \\
      0 & 0 & E \\
    \end{array}
  \right)
\]
and $M_{i}$ are square $n/3\by n/3$-matrices.

Then $\Sigma$ is a CP family if and only if the set $\Psi$ of two matrices
$M_{2}M_{1}$ and $M_{3}M_{1}$ is asymptotically stable, that is, if
$\rho(\Sigma)<1$ (otherwise the variation of paths is not upper bounded).
These are arbitrary matrices of size $n/3\by n/3$, hence the hardness of
recognition is the same as in the first case, but in a space of lower
dimension.

\section{Infinite bounded families}

If $\Sigma$ is an infinite bounded subset of $M_{n}(\R)$, we conjecture that,
again, CP is equivalent to LCP and RCP together. Note that even if we
restrict matrices to orthogonal projections in $\R^{2}$ (they are
self-conjugated, hence, LCP=RCP), infinite families can be either CP or not.

The following assertion can be extended to higher dimensions.
\begin{thm}
A family $\Sigma$ of orthogonal projections on straight lines $\R h$,
$h\in{\mathcal H}\subseteq S_{1}$ in $\R^{2}$ is CP if and only if, for any
sector ${\mathcal K}$ with non-empty interior in $\R^{2}$ there exists a
sector ${\mathcal K}'\subseteq{\mathcal K}$ with non-empty interior such that
${\mathcal K}'\cap {\mathcal H}=\emptyset$.
\end{thm}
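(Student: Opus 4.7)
The plan is to recognize the sector-condition as saying that $\mathcal{H}$ is nowhere dense in $S_1$ (every open arc contains a non-empty open sub-arc disjoint from $\mathcal{H}$), and then to exploit the explicit rank-one formula
\[
P_{h_n}P_{h_{n-1}}\cdots P_{h_1} \;=\; \alpha_n\,h_n h_1^{*},
\qquad
\alpha_n\;=\;\prod_{i=1}^{n-1}\langle h_i,h_{i+1}\rangle\;=\;\prod_{i=1}^{n-1}\cos\theta_i,
\]
where $\theta_i=\angle(h_i,h_{i+1})$. This reduces the LCP question to tracking the scalar $\alpha_n$ and the leading unit vector $h_n$.

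For necessity I argue contrapositively: if the sector-condition fails, $\overline{\mathcal{H}}$ contains an open arc $A$ in which $\mathcal{H}$ is dense. Inside $A$ I pick $h_n\in\mathcal{H}$ whose angular coordinates $\phi_n$ track a slowly oscillating trajectory that stays in $A$ without converging as a line; a concrete target is $\phi_n\approx a+\varepsilon\sin(\log n)$ for small $\varepsilon$. Then $|\phi_{n+1}-\phi_n|=O(1/n)$, hence $\sum\theta_i^2<\infty$ and $|\alpha_n|$ tends to a positive limit, while $(h_n)$ has two distinct accumulation points. Therefore $P_{h_n}\cdots P_{h_1}$ does not converge, LCP is violated, and $\Sigma$ is not CP.

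For sufficiency, assume $\mathcal{H}$ is nowhere dense. Since any $M\colon R\to\Sigma$ is equally a map into the (still nowhere dense) family associated with $\overline{\mathcal{H}}$, I pass to the closure and assume $\mathcal{H}$ is closed. The heart of the argument is the geometric lemma: \emph{if $(h_i)\subset\mathcal{H}$ satisfies $\theta_i\to 0$, then $(h_i)$ converges}. Indeed, lifting the angular coordinates to $\R$, the hypothesis $|\phi_{i+1}-\phi_i|\to 0$ forces the accumulation set of $(\phi_i)$ to be a connected closed interval, so the accumulation set of $(h_i)$ in $S_1/\{\pm 1\}$ is a closed connected arc; since this arc lies in $\overline{\mathcal{H}}$, which has empty interior, it collapses to a single point. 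Combined with the rank-one formula, this gives LCP at once: either $|\alpha_n|\to 0$ and the product tends to zero, or $\theta_i\to 0$, whence $h_n\to h_\infty$, $\alpha_n\to\alpha_\infty$, and the product tends to $\alpha_\infty h_\infty h_1^{*}$. RCP is automatic by transposition, since every $P_h$ is self-adjoint.

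To obtain the full CP property over an arbitrary linearly ordered set $R$ and map $A\colon R\to\Sigma$, I adapt the inductive scheme of Section~4, replacing the role of $\#J$ by the topological structure of the closed nowhere-dense set $\overline{A(R)}\subset S_1$: on any sub-interval of $R$ whose $A$-image has strictly smaller closure the induction continues on a smaller nowhere-dense set, while on the remaining ``complete'' sub-intervals the slow-drift obstruction forces the product to collapse (to zero in the irreducible case). I expect the main technical obstacle to be precisely this inductive step, since in the finite case the induction is on $\#J$, whereas here the natural well-founded parameter is the Cantor--Bendixson rank of $\overline{A(R)}$, a countable but possibly transfinite ordinal, and handling the limit stages requires care. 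Given the rank-one form of all finite products, however, I expect a direct Cauchy-net argument on the scalar $\alpha_F$ and on the endpoint vectors $h(\min F),h(\max F)$, as $F$ ranges over finite subsets of $R$, to suffice and bypass the transfinite bookkeeping altogether.
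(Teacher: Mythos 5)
Your core argument is sound and in fact supplies far more detail than the paper does: the paper's entire ``proof'' of this theorem is the single remark that there are no non-trivial limit cycles for paths of $\Sigma$, and your rank-one reduction $P_{h_n}\cdots P_{h_1}=\alpha_n h_nh_1^{*}$ is exactly the right way to make that remark precise. The necessity direction (density of $\mathcal H$ in an arc permits a slowly oscillating path with $\sum\theta_i^2<\infty$, so $|\alpha_n|$ stays bounded away from zero while $h_n$ fails to converge --- a non-trivial limit cycle) and the key sufficiency lemma (increments tending to zero force the accumulation set of $(h_i)$ to be a connected subset of the nowhere dense set $\overline{\mathcal H}$, hence a point) are both correct, and together they establish LCP; RCP follows by self-adjointness and transversality is automatic for orthogonal projections, since $N_{J'}$ and $R_{J'}$ are orthogonal complements. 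Under the paper's literal definition of CP (LCP, RCP and transversality) you are therefore essentially done, and your route matches the spirit of the paper's one-line justification while actually proving it.

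The genuine gap is your last paragraph. If CP is read as requiring the continuous product $M(R)$ to exist for every map from an arbitrary linearly ordered set --- which is what the paper's Section 4 establishes only for \emph{finite} $\Sigma$, by induction on $\#J$ --- then your extension is a plan, not a proof, and the shortcut you hope for does not work as stated. The net $F\mapsto\alpha_F$ is \emph{not} monotone under refinement: inserting a point between two order-adjacent points of $F$ replaces the factor $\cos(\phi_2-\phi_1)$ by $\cos(\phi-\phi_1)\cos(\phi_2-\phi)$, and for $\phi_1=0$, $\phi=\pi/4$, $\phi_2=\pi/2$ this raises the modulus from $0$ to $1/2$. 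So $|\alpha_F|$ can increase along the net, and a ``direct Cauchy-net argument on the scalar $\alpha_F$'' needs an actual mechanism for why the net stabilizes --- for instance, a quantitative version of your accumulation-set lemma controlling $\sum_i(1-\cos\theta_i)$ uniformly over refinements, or the transfinite Cantor--Bendixson induction you mention but do not carry out. You should either supply that argument or state explicitly that you are proving the LCP/RCP/TR form of the statement only.
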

The crucial point here is that there are no non-trivial limit cycles for
paths of $\Sigma$.


\begin{thebibliography}{10}

\bibitem{BerBo9221} M.~Berger and Y.~Wang.
\newblock Bounded semigroups of matrices.
\newblock {\em Linear Algebra Appl.}, 166:21--27, 1992.

\bibitem{BloTh0013} V.~Blondel and J.~N. Tsitsiklis.
\newblock The boundedness of all products of a pair of matrices is undecidable.
\newblock {\em Systems Control Lett.}, 41:135--140, 2000.

\bibitem{BloTh2008} Vincent~D. Blondel.
\newblock The birth of the joint spectral radius: {A}n interview with {G}ilbert
  {S}trang.
\newblock {\em Linear Algebra and its Applications}, 428:2261 -- 2264, 2008.

\bibitem{DauSe9222} I.~Daubechies and J.~C. Lagarias.
\newblock Sets of matrices all infinite products of which converge.
\newblock {\em Linear Algebra Appl.}, 161:227--263, 1992.

\bibitem{ElsNo9733} L.~Elsner and S.~Friedland.
\newblock Norm conditions for convergence of infinite products.
\newblock {\em Linear Algebra Appl.}, 250:133--142, 1997.

\bibitem{Jun09} R.~Jungers.
\newblock {\em The joint spectral radius. {T}heory and applications}, volume
  385 of {\em Lecture Notes in Control and Information Sciences}.
\newblock Springer-Verlag, London, 2009.

\bibitem{Koz:LAA10} V.~Kozyakin.
\newblock {An explicit Lipschitz constant for the joint spectral radius}.
\newblock {\em Linear Algebra and its Applications}, 433(1):12--18, 2010.

\bibitem{KrePo0391} P.~Krejci and A.~Vladimirov.
\newblock Polyhedral sweeping processes with oblique reflection in the space of
  regulated functions.
\newblock {\em Set-Valued Anal.}, 11:91--110, 2003.

\bibitem{KunAn00} M.~Kunze and M.D.P. Monteiro~Marques.
\newblock An introduction to {M}oreau's sweeping process.
\newblock In {\em Impacts in Mechanical Systems - Analysis and Modelling},
  volume 551 of {\em Lecture Notes in Physics}, pages 1--60. Springer,
  Berlin-New York, 2000.

\bibitem{10.2307/40800846} A.~Mandelbaum and K.~Ramanan.
\newblock Directional derivatives of oblique reflection maps.
\newblock {\em Mathematics of Operations Research}, 35(3):527--558, 2010.

\bibitem{MorEv7734} J.~J. Moreau.
\newblock Evolution problem associated with a moving convex set in a {H}ilbert
  space.
\newblock {\em Journ. of Dif. Eq.}, 20:347--374, 1977.

\bibitem{RotA6037} G.-C. Rota and W.~G. Strang.
\newblock A note on the joint spectral radius.
\newblock {\em Indag. Math.}, 22:379--381, 1960.

\bibitem{VlaSt9912} A.~A. Vladimirov, L.~Elsner, and W.-J. Beyn.
\newblock Stability and paracontractivity of discrete linear inclusions.
\newblock {\em Linear Algebra Appl.}, 312:125--134, 2000.

\bibitem{1064-5616-206-7-921} A.~S. Voynov and V.~Yu. Protasov.
\newblock Compact noncontraction semigroups of affine operators.
\newblock {\em Sbornik: Mathematics}, 206(7):921, 2015.

\end{thebibliography}
\end{document}